\newtheorem{thm}{Theorem}[section]
\newtheorem{prop}[thm]{Proposition}
\newtheorem{dfn}[thm]{Definition}
\newtheorem{lem}[thm]{Lemma}
\newtheorem{cor}[thm]{Corollory}
\journal{Nuclear Physics B}
\begin{document}

\begin{frontmatter}



\title{Hardness and Structural Properties of Fuzzy Edge Contraction} 

\author[inst1]{Shanookha Ali\corref{cor1}}

\affiliation[inst1]{
  organization={ Department of General Science, Birla Institute of Technology \& Science, Pilani, Dubai Campus}, 
  city={Dubai},
    postcode={345055},
  country={United Arab Emirates}
}

\cortext[cor1]{Corresponding author. Email: shanookha@dubai.bits-pilani.ac.in}


\begin{abstract}
We investigate the computational complexity of edge-deletion and edge-contraction problems in fuzzy graphs. For any graph property $\Pi$ that is hereditary under contractions (or deletions) and determined by 3-connected components, the corresponding fuzzy edge-deletion (FPED) and fuzzy edge-contraction (FPEC) problems are NP-hard. Our results hold under both fixed-threshold ($\alpha_0$) and all-threshold ($\forall \alpha$) semantics, and apply even to restricted classes of fuzzy graphs such as fuzzy 3-connected or fuzzy bipartite graphs. We further demonstrate that well-known properties, including planarity and series–parallelness, satisfy these conditions, making the fuzzy versions of these classical graph problems computationally intractable. The proofs leverage reductions from classical NP-hard problems and generalize the constructions to the fuzzy setting while preserving key structural properties.
\end{abstract}

\begin{keyword}
Fuzzy graphs, fuzzy Edge-deletion, fuzzy Edge-contraction, NP-hardness, Threshold semantics

\end{keyword}

\end{frontmatter}

\section{Introduction}

\noindent Edge-deletion and edge-contraction are two of the most fundamental 
operations in graph theory, and they play a central role in the 
structural study of graphs. Classical results of Asano and Hirata~\cite{AsanoHirata1982, AsanoHirata1983} established that the corresponding 
edge-deletion problem (PED) and edge-contraction problem (PEC) are 
NP-hard for a wide class of hereditary graph properties determined by 
3-connected components. These results form the foundation for much of 
the hardness landscape in graph modification problems.

\noindent In parallel, fuzzy set theory and fuzzy graph theory provide a natural 
framework to model uncertainty in networks. Zadeh~\cite{zadeh1965fuzzy}
introduced the notion of \emph{$\alpha$-cuts} in fuzzy sets, and 
Rosenfeld~\cite{rosenfeld1977fuzzy} extended this idea to fuzzy graphs, where 
$\alpha$-cut graphs provide a crisp approximation of fuzzy structures at 
different thresholds. A comprehensive treatment of fuzzy graphs and their 
operations, including $\alpha$-cut based interpretations, can be found in 
the monograph by Mordeson and Nair~\cite{mordeson2000fuzzy}. In related work, Ramya and Lavanya~\cite{ramya2023contraction} introduced edge contraction and neighbourhood contraction operations for fuzzy graphs and investigated their impact on domination parameters. Their study provides an initial formal framework for contraction-based operations in fuzzy graph theory.\medskip

\noindent Building on this foundation, we introduce and study the 
\emph{fuzzy edge-deletion} (FPED) and \emph{fuzzy edge-contraction} 
(FPEC) problems. Specifically, we formalize two types of 
\emph{$\alpha$-semantics}: the \emph{threshold semantics}, where a fixed 
$\alpha_0$ is considered, and the stronger \emph{all-$\alpha$ semantics}, 
where properties must hold across all thresholds. 
To the best of our knowledge, these formulations do not appear in the 
existing fuzzy graph literature. Our main contribution is to show that 
the NP-hardness of PED and PEC extends robustly to the fuzzy setting, 
under both threshold and all-$\alpha$ semantics, even for structured 
subclasses such as fuzzy 3-connected and bipartite graphs. This work complements and extends earlier research on node connectivity, Hamiltonicity, and container structures in fuzzy graphs with applications to human trafficking~\cite{ali2018vertex,ali2021hamiltonian,ali2024containers}.

\medskip
\noindent In this paper we develop the theory of \emph{fuzzy edge contractions}
under $\alpha$-semantics.  
Formally, contracting an edge $e=uv$ merges $u$ and $v$ into a new node $w$,
and assigns new edge memberships to $w$ using a suitable $t$-norm $T$.
We establish several foundational results form \cite{samanta2011fuzzy}:

\begin{enumerate}
    \item Contraction \emph{commutes with $\alpha$-cuts} for edges
          of membership at least $\alpha$, ensuring that reasoning can be
          carried out entirely in the crisp $\alpha$-cut.
    \item Edge memberships under contraction are \emph{monotone} with respect
          to the original, so $\alpha$-adjacency sets are preserved as intersections
          of neighborhoods.
    \item Contractions on disjoint edges are \emph{order-independent},
          paralleling the crisp case.
    \item Classical hereditary properties determined by $3$-connected components
          \emph{lift naturally} to the fuzzy setting via $\alpha$-cuts.
    \item Connectivity, neighborhood structure, and planarity-type properties
          remain robust under fuzzy contractions, allowing NP-hardness reductions
          from the crisp case.
\end{enumerate}

\noindent Taken together, these results provide the first systematic extension of
contraction-based structural graph theory into the fuzzy setting.
They also form the basis for our complexity results on fuzzy edge-deletion and
edge-contraction problems (FPED and FPEC), under both threshold and all-$\alpha$
semantics, which we show remain NP-hard.
\medskip

\noindent Section~\ref{sec:prelim} recalls preliminaries on fuzzy graphs and $\alpha$-semantics.
Section~\ref{sec:contraction} develops the structural theory of fuzzy edge contraction.
Section~\ref{sec:hardness} applies these tools to complexity questions,
showing NP-hardness for FPED and FPEC in several natural subclasses.
The results establish that with open directions in Section~\ref{sec:conclusion}.

\section{Preliminaries}\label{sec:prelim}

\noindent A fuzzy graph is a pair 
$G_f = (\mu_V,\mu_E)$ 
where $\mu_V: V \to [0,1]$ assigns a membership to each node, 
and $\mu_E: V \times V \to [0,1]$ assigns a membership to each edge, 
with $\mu_E(u,v) \le \min\{\mu_V(u),\mu_V(v)\}$.

For $\alpha \in (0,1]$, the \emph{$\alpha$-cut} of $G_f$ is the crisp graph
\[
G_f^{(\alpha)} = \big(V_\alpha, \mu_E^{\alpha}\big),
\quad
V_\alpha = \{ v \in V : \mu_V(v) \ge \alpha \}, 
\quad
\mu_E^{\alpha} = \{ (u,v) : \mu_E(u,v) \ge \alpha \}.
\]

\begin{dfn}\cite{ramya2023contraction,mordeson2012fuzzy}
Let $G_f = (\mu_V,\mu_E^{~})$ be a fuzzy graph and let $(u,v)$ be an edge of $G_f$ with membership $\mu = \tilde{E}(u,v)$. 
The \emph{fuzzy edge contraction} of $(u,v)$ merges the nodes$u$ and $v$ into a single node $w$, forming a new fuzzy graph $G_f^{uv}$ with
\[
V' = (V \setminus \{u,v\}) \cup \{w\}.
\]

The membership of the new node is 
\[
\mu(w) = \min\{\mu(u), \mu(v)\} \quad \text{(or using an appropriate $t$-norm)}.
\]

For every $x \in V \setminus \{u,v\}$, the edge memberships are updated by
\[
\tilde{E}'(w,x) = T(\tilde{E}(u,x), \tilde{E}(v,x)),
\]
where $T$ is a $t$-norm (commonly the minimum). 
All other edge memberships remain unchanged.
\end{dfn}
\noindent  Figure \ref{fig:your-label} shows fuzzy graph $G_f$ with edge membership values and the resulting $\alpha$-cut graphs. Known results on crisp contractions \cite{AsanoHirata1983}:
\begin{figure}[h!]
\centering

\tikzstyle{vertex}=[circle, draw=black, fill=cyan!30, minimum size=15pt, inner sep=0pt]
\tikzstyle{edge}=[line width=0.8pt]
\tikzstyle{boldedge}=[line width=1.2pt]

\begin{subfigure}{0.32\textwidth}
\centering
\begin{tikzpicture}[scale=1]

\node[vertex] (1) {1};
\node[vertex, below left=1.2cm and 0.3cm of 1] (2) {2};
\node[vertex, below right=1.2cm and 0.3cm of 2] (5) {5};
\node[vertex, below left=1.2cm and 0.6cm of 2] (3) {3};
\node[vertex, left=1.2cm of 3] (4) {4};

\draw[edge] (1) -- (2) node[midway, above] {0.9};
\draw[edge] (1) -- (5) node[midway, right] {0.6};
\draw[edge, dotted] (4) to[bend left=25] node[midway, below] {0.4} (5);

\draw[edge] (2) -- (5) node[midway, above] {0.8};
\draw[edge] (2) -- (3) node[midway, left] {0.3};
\draw[edge] (3) -- (4) node[midway, above] {0.5};
\draw[edge] (2) -- (3) ++(0,-0.1); 
\draw[edge] (2) -- (3) node[midway, right] {0.7};

\end{tikzpicture}
\caption{Fuzzy Graph $G_f$}
\end{subfigure}
%
\begin{subfigure}{0.32\textwidth}
\centering
\begin{tikzpicture}[scale=1]

\node[vertex] (1) {1};
\node[vertex, below left=1.2cm and 0.3cm of 1] (2) {2};
\node[vertex, below right=1.2cm and 0.3cm of 2] (5) {5};
\node[vertex, below left=1.2cm and 0.6cm of 2] (3) {3};
\node[vertex, left=1.2cm of 3] (4) {4};

\draw[boldedge] (1) -- (2) node[midway, above] {0.9};
\draw[edge, dotted] (4) to[bend left=25] node[midway, below] {0.4} (5);

\draw[boldedge] (1) -- (5) node[midway, right] {0.6};
\draw[boldedge] (2) -- (5) node[midway, above] {0.8};
\draw[boldedge] (2) -- (3) node[midway, left] {0.3};
\draw[boldedge] (3) -- (4) node[midway, above] {0.5};
\draw[boldedge] (2) -- (3) node[midway, right] {0.7};

\end{tikzpicture}
\caption{$\alpha$-cut $G_f^{(0.5)}$}
\end{subfigure}
%
\begin{subfigure}{0.32\textwidth}
\centering
\begin{tikzpicture}[scale=1]

\node[vertex] (1) {1};
\node[vertex, below left=1.2cm and 0.3cm of 1] (2) {2};
\node[vertex, below right=1.2cm and 0.3cm of 2] (5) {5};
\node[vertex, below left=1.2cm and 0.6cm of 2] (3) {3};
\node[vertex, left=1.2cm of 3] (4) {4};

\draw[boldedge] (1) -- (2) node[midway, above] {0.9};
\draw[boldedge] (1) -- (5) node[midway, right] {0.6};
\draw[edge, dotted] (4) to[bend left=25] node[midway, below] {0.4} (5);

\draw[boldedge] (2) -- (5) node[midway, above] {0.8};
\draw[edge] (2) -- (3) node[midway, left] {0.3};
\draw[edge] (3) -- (4) node[midway, above] {0.5};
\draw[edge] (2) -- (3) node[midway, right] {0.7};

\end{tikzpicture}
\caption{$\alpha$-cut $G_f^{(0.8)}$}
\end{subfigure}
\caption{Fuzzy graph $G_f$ showing edge membership values and the resulting $\alpha$-cut graphs for $\alpha=0.5$ and $\alpha=0.8$. Bold edges indicate edges present in the corresponding $\alpha$-cut.}
\label{fig:your-label}
\end{figure}
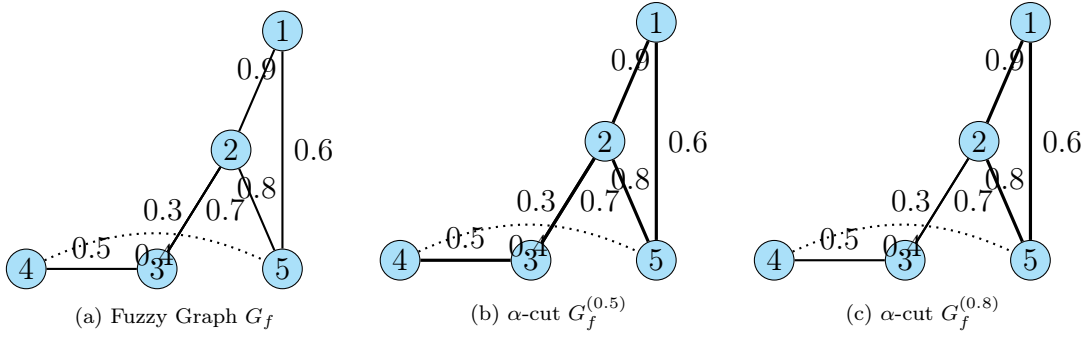

\noindent Two interpretations of graph properties for fuzzy graphs are common:

\begin{itemize}
  \item \textbf{Threshold semantics:} fix $\alpha_0\in(0,1]$ and 
        require the property to hold in $G_f^{(\alpha_0)}$.
  \item \textbf{All-$\alpha$ semantics:} require the property to hold 
        in every $G_f^{(\alpha)}$ for $\alpha \in (0,1]$.
\end{itemize} 
Asano and Hirata~\cite{AsanoHirata1983} proved that the 
\emph{edge-deletion problem} and the \emph{edge-contraction problem} 
are NP-hard for any graph property $\Pi$ that is hereditary under the 
respective operation and is determined by $3$-connected components 
(or biconnected components). 
This foundational result motivates our extension to fuzzy graphs.

\begin{dfn}\cite{mathew2018fuzzy}
A fuzzy graph is \emph{fuzzy 3-connected} if every 2-node cut has total membership below a threshold $\alpha$.
\end{dfn}

\begin{dfn} \cite{bowen1992fuzzy}
A fuzzy graph property $\sim$ is \emph{hereditary under fuzzy contractions} if contracting any fuzzy edge does not create a violation of $\sim$ in any thresholded subgraph.
\end{dfn}

\begin{dfn} \cite{bowen1992fuzzy}
Let $G_f = ( \mu_V,  \mu_E)$ be a fuzzy graph, where 
$\mu_V : V \to [0,1]$ and $\mu_E : E \to [0,1]$ denote the vertex and edge 
membership functions, respectively. For any threshold $\alpha \in (0,1]$, 
the \emph{$\alpha$-cut} of $G_f$ is the crisp graph
\[
G_f^{(\alpha)} = \big(\mu_V^{\alpha}, \mu_E^{\alpha}\big),
\]
where 
\[
\mu_V^{\alpha} = \{ v \in V : \mu_V(v) \ge \alpha \}, \qquad 
\mu_E^{\alpha} = \{ e \in E : \mu_E(e) \ge \alpha \}.
\]

Given a graph property $\Pi$, we define two types of \emph{$\alpha$-semantics} 
for fuzzy edge-deletion (FPED) and fuzzy edge-contraction (FPEC) problems:

\begin{enumerate}
    \item \textbf{Threshold semantics (fixed $\alpha_0$):}  
    For a fixed threshold $\alpha_0 \in (0,1]$, the instance 
    $(G_f,k)$ is a YES-instance of FPED$_{\alpha_0}(\Pi)$ 
    (resp. FPEC$_{\alpha_0}(\Pi)$) if there exists a set of at most $k$ 
    edge deletions (resp. contractions) such that the resulting 
    $\alpha_0$-cut graph satisfies $\Pi$, i.e.,
    \[
    (G_f - F)^{(\alpha_0)} \models \Pi \quad 
    \text{or} \quad (G_f / F)^{(\alpha_0)} \models \Pi.
    \]

    \item \textbf{All-$\alpha$ semantics:}  
    The instance $(G_f,k)$ is a YES-instance of FPED$_\forall(\Pi)$ 
    (resp. FPEC$_\forall(\Pi)$) if there exists a set of at most $k$ 
    edge deletions (resp. contractions) such that the resulting fuzzy graph 
    satisfies $\Pi$ across \emph{all} thresholds, i.e.,
    \[
    (G_f - F)^{(\alpha)} \models \Pi \quad 
    \text{or} \quad (G_f / F)^{(\alpha)} \models \Pi, 
    \qquad \forall \alpha \in (0,1].
    \]
\end{enumerate}
\end{dfn}
\begin{dfn}
Let $G =(\mu_V,\mu_E)$ be a graph and $S \subseteq V$ be a non-empty set of nodes, called \emph{terminal nodes}. A \emph{Steiner tree} for $S$ is a connected subgraph $T = (\mu_{V}^{T}, \mu_{E}^{T})$ of $G$ of minimal size such that $S \subseteq \mu_{V}^{T}$. That is, $T$ contains all nodes in $S$ and possibly additional nodes from $V \setminus S$ (called Steiner nodes) to ensure connectivity, and the number of edges in $T$ is minimized.
\end{dfn}

\section{Fuzzy Edge Contraction}\label{sec:contraction}

\noindent Fuzzy graphs extend classical graph theory by allowing nodesand edges to have membership values in the interval $[0,1]$, capturing uncertainty or partial presence in networks. 
In many applications, such as network simplification, clustering, or graph editing, it is useful to contract edges, merging their endpoints into a single node while updating the adjacency structure appropriately. In the fuzzy setting, edge contraction requires careful handling of membership values. We adopt a standard approach based on a $t$-norm $T:[0,1]^2\to[0,1]$ to combine memberships when merging two vertices. The resulting fuzzy contraction preserves essential structural properties, while allowing crisp reasoning via $\alpha$-cuts.  \medskip

\noindent This section develops the theoretical foundations of fuzzy edge contraction. It follows that basic properties of $t$-norms and their monotonicity. We then show that, under suitable membership thresholds, contraction and $\alpha$-cut operations commute, ensuring consistency between fuzzy and crisp perspectives. Further results establish monotonicity of memberships, order-independence for node-disjoint contractions, and preservation of hereditary properties. This setion demonstrates also quantify the effect of contraction on $\alpha$-neighborhoods and edge-connectivity  and provide a characterization via 3-connected components.   \medskip

\noindent Together, these results provide a rigorous framework for manipulating fuzzy graphs via edge contractions while preserving structural properties under thresholded interpretations.  The framework supports both theoretical analysis and practical applications, such as fuzzy network simplification and algorithmic graph editing.

\begin{lem}\label{lem:Tfacts}
Let $T:[0,1]^2\to[0,1]$ be a  $t$-norm used in fuzzy contractions.
Then for all $a,b\in[0,1]$,
\[
T(a,b)\le a,\qquad T(a,b)\le b,\qquad
\text{and}\quad a\le a',\,b\le b' \Rightarrow T(a,b)\le T(a',b').
\]
Moreover $T$ is associative and commutative.
\end{lem}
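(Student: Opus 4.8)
The plan is to recall that a $t$-norm is, by definition, a binary operation $T:[0,1]^2\to[0,1]$ satisfying four axioms: commutativity, associativity, monotonicity in each argument, and the boundary (identity) condition $T(a,1)=a$ for all $a\in[0,1]$. Under this standard axiomatization the associativity, commutativity, and monotonicity assertions of the lemma are immediate, since they are precisely the defining axioms and require no argument. The only genuine content of the statement is therefore the boundedness $T(a,b)\le a$ and $T(a,b)\le b$, and I would derive these from the neutral element together with monotonicity.

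To establish $T(a,b)\le a$, I would fix $a,b\in[0,1]$ and observe that $b\le 1$. Applying monotonicity in the second argument (holding the first fixed at $a$) to the inequality $b\le 1$ yields $T(a,b)\le T(a,1)$, and the boundary condition $T(a,1)=a$ gives $T(a,b)\le a$. The bound $T(a,b)\le b$ follows symmetrically: either repeat the argument in the first coordinate using $T(1,b)=b$ (which itself follows from commutativity together with the boundary axiom), or simply invoke commutativity to write $T(a,b)=T(b,a)\le b$.

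The argument contains no substantive obstacle; the one point that must be made explicit is which axiomatization of a $t$-norm is in force. If $T$ were required only to be commutative, associative, and monotone (omitting the neutral element), then boundedness could fail—for instance, the constant map $T\equiv 0$ satisfies those three conditions yet violates $T(1,1)\le 1$ only vacuously while still being a poor contraction operator—so the essential step is to confirm that the identity axiom $T(a,1)=a$ is part of the intended definition. With that axiom in place, the whole lemma reduces to the two one-line monotonicity-plus-identity computations above.
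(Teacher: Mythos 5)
Your main argument is correct and is the standard one: the paper states this lemma without any proof at all (it is simply invoked later as ``basic properties of $t$-norms''), so there is no authorial argument to compare against, but your derivation of $T(a,b)\le a$ from $b\le 1$, monotonicity in the second coordinate, and the neutral-element axiom $T(a,1)=a$, together with commutativity for the symmetric bound, is exactly the intended justification and is complete. You are also right that the essential hypothesis is the identity axiom; the paper's phrase ``a $t$-norm used in fuzzy contractions'' tacitly assumes the full four-axiom definition, and making that explicit is a genuine improvement over the paper's presentation.

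One small correction to your closing remark: the map $T\equiv 0$ does \emph{not} witness failure of boundedness, since $0\le \min\{a,b\}$ holds for all $a,b\in[0,1]$; that operation satisfies $T(a,b)\le a$ and $T(a,b)\le b$ trivially (it fails only the identity axiom itself). A correct witness that commutativity, associativity, and monotonicity alone do not imply boundedness is the constant map $T\equiv 1$, or $T(a,b)=\max\{a,b\}$, both of which satisfy those three conditions yet violate $T(a,b)\le a$ whenever $a<1$ (respectively $a<b$). This does not affect the validity of your proof, which nowhere relies on the counterexample, but the illustrative claim as stated is false and should be replaced.
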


\begin{thm}
\label{thm:commute}
Let $G_f=(\mu_V,\mu_E)$ be a fuzzy graph, fix $\alpha\in(0,1]$, and let $e=uv\in E$ with
$\mu_E(e)\ge\alpha$. Assume node memberships satisfy $\mu_V(u),\mu_V(v)\ge\alpha$.
Let $G_f/e$ be the fuzzy contraction using a $t$-norm $T$.
Then the crisp graphs obtained in either order coincide:
\[
\big(G_f/e\big)^{(\alpha)} \;\cong\; \big(G_f^{(\alpha)}\big)/e.
\]
\end{thm}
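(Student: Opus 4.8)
The plan is to prove the isomorphism directly, by exhibiting the natural bijection on vertices and then checking that adjacency is preserved in both directions. Since both sides are crisp graphs, it suffices to identify their vertex sets under the obvious map --- each surviving node to itself, and the merged node to the merged node --- and then verify that two vertices are adjacent in $\big(G_f/e\big)^{(\alpha)}$ exactly when their images are adjacent in $\big(G_f^{(\alpha)}\big)/e$.

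First I would settle the vertex sets. On the left, $G_f/e$ has node set $(V\setminus\{u,v\})\cup\{w\}$, and passing to the $\alpha$-cut retains a node iff its membership is at least $\alpha$; the hypothesis $\mu_V(u),\mu_V(v)\ge\alpha$ gives $\mu(w)=\min\{\mu_V(u),\mu_V(v)\}\ge\alpha$, so $w$ survives, while every other surviving node $x$ satisfies $\mu_V(x)\ge\alpha$. On the right, the hypotheses $\mu_V(u),\mu_V(v)\ge\alpha$ together with $\mu_E(e)\ge\alpha$ guarantee that $u$, $v$, and the edge $e$ all appear in $G_f^{(\alpha)}$, so the crisp contraction $\big(G_f^{(\alpha)}\big)/e$ is well-defined and has node set $(V_\alpha\setminus\{u,v\})\cup\{w\}$. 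These two sets coincide, which fixes the identification.

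Next the edges. Edges with both endpoints in $V\setminus\{u,v\}$ are untouched by either contraction, so such a pair is adjacent on the left iff $\mu_E(x,y)\ge\alpha$ iff it is adjacent on the right; this case is routine. The crux is the edges incident to the merged node $w$: on the left, $w$ is adjacent to $x$ iff $\tilde E'(w,x)=T(\mu_E(u,x),\mu_E(v,x))\ge\alpha$, whereas on the right, in the crisp contraction $w$ is adjacent to $x$ iff $x$ was a neighbour of $u$ or of $v$ in $G_f^{(\alpha)}$. By Lemma~\ref{lem:Tfacts}, $T(\mu_E(u,x),\mu_E(v,x))\le\min\{\mu_E(u,x),\mu_E(v,x)\}$, so any $w$-incident edge surviving on the left forces both $\mu_E(u,x)\ge\alpha$ and $\mu_E(v,x)\ge\alpha$; hence every left-edge is a right-edge, and $\big(G_f/e\big)^{(\alpha)}$ embeds into $\big(G_f^{(\alpha)}\big)/e$ on the common vertex set.

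The main obstacle, and the step I would scrutinise most carefully, is the reverse inclusion for $w$-incident edges: it is \emph{not} automatic that $\mu_E(u,x)\ge\alpha$ or $\mu_E(v,x)\ge\alpha$ implies $T(\mu_E(u,x),\mu_E(v,x))\ge\alpha$, because a t-norm can collapse a large value against a small one. To close the argument I would pin down the combining convention, working with $T=\min$ together with the matching reading of the crisp-contraction rule, so that the surviving-neighbour condition on the right becomes $\min\{\mu_E(u,x),\mu_E(v,x)\}\ge\alpha$ and agrees with the left condition verbatim. Once the two adjacency tests are shown to coincide, the identity map on the common vertex set is the desired graph isomorphism, and the commutation $\big(G_f/e\big)^{(\alpha)}\cong\big(G_f^{(\alpha)}\big)/e$ follows.
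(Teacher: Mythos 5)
Your proposal follows essentially the same route as the paper's proof: identify the vertex sets under the obvious map, dispose of the edges not incident to the merged node, and reduce the $w$-incident case to comparing $T(\mu_E(ux),\mu_E(vx))\ge\alpha$ with the crisp-contraction adjacency test via Lemma~\ref{lem:Tfacts}. The one substantive difference is that you are more careful at the decisive step than the paper is. The paper's proof asserts that ``$\mu'_E(wx)\ge\alpha$ iff both $\mu_E(ux)\ge\alpha$ and $\mu_E(vx)\ge\alpha$'' and attributes this to Lemma~\ref{lem:Tfacts}; but that lemma only supplies the forward implication ($T(a,b)\le\min\{a,b\}$), and the converse fails for non-idempotent $t$-norms (e.g.\ the product $t$-norm with $a=b=0.6$, $\alpha=0.5$). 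You correctly flag exactly this as the obstacle and close it by fixing $T=\min$, together with reading the crisp contraction as assigning $w$ the \emph{intersection} of the neighbourhoods of $u$ and $v$ (consistent with Theorem~\ref{thm:bridging}) rather than the standard union. Under those two conventions your argument is complete and the two adjacency tests coincide verbatim; without them the theorem as stated does not hold, so your patch is not pedantry but a genuine repair of a gap the paper's proof elides.
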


\begin{figure}[h!]
\centering
\begin{tikzpicture}[every node/.style={circle, draw=black, fill=cyan!30, minimum size=15pt, inner sep=0pt}, scale=1]

\node (u) at (0,0) {u};
\node (v) at (2,0) {v};
\node (x) at (1,-2) {x};

\draw (u) -- (v);
\draw (u) -- (x);
\draw (v) -- (x);

\draw[red, dashed] (-1,-1.5) -- (3,0.5) node[right] {$\alpha$};

\node (w) at (5,0) {w};
\node (x2) at (6,-2) {x};

\draw (w) -- (x2);

\node at (1,-3) {$G_f$};
\node at (5,-3) {$G_f/e$};

\end{tikzpicture}
\caption{ Abstract representation of fuzzy graph contraction and $\alpha$-cut.}
\label{fig:fuzzy-abstract}
\end{figure}
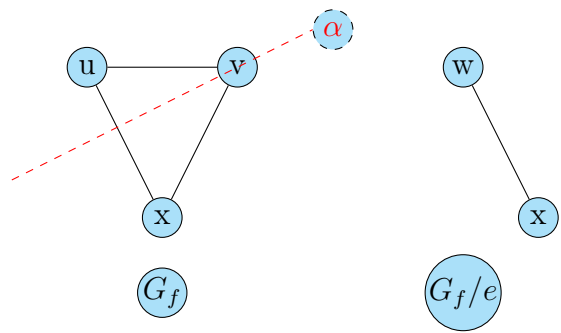

\begin{proof}
Since $\mu_E(e)\ge\alpha$ and $\mu_V(u),\mu_V(v)\ge\alpha$, edge $e$ and its endpoints
survive in $G_f^{(\alpha)}$ and can be contracted there.
Conversely, in $G_f/e$ the new incidences to a third node $x$ have membership
$\mu'_E(wx)=T(\mu_E(ux),\mu_E(vx))$ (mentioned in Figure \ref{fig:fuzzy-abstract}).
By Lemma~\ref{lem:Tfacts}, $\mu'_E(wx)\ge\alpha$ iff both $\mu_E(ux)\ge\alpha$ and $\mu_E(vx)\ge\alpha$,
which is exactly the rule for adjacencies after contracting $e$ in the crisp $\alpha$-cut.
Thus the two crisp graphs are isomorphic.
\end{proof}

\begin{thm}
\label{thm:mono}
Let $e=uv$ be contracted into $w$ using a $t$-norm $T$.
For every $x\in V\setminus\{u,v\}$,
\[
\mu'_E(wx) = T\big(\mu_E(ux),\mu_E(vx)\big) \;\le\; \min\{\mu_E(ux),\mu_E(vx)\}.
\]
Consequently, for any $\alpha$, if $x$ is adjacent to $w$ in $(G_f/e)^{(\alpha)}$, then
$x$ was adjacent to both $u$ and $v$ in $G_f^{(\alpha)}$.
\end{thm}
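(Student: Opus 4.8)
The plan is to derive both assertions directly from Lemma~\ref{lem:Tfacts}, since the theorem is essentially a corollary of the domination property of $t$-norms rather than an independent result. First I would recall that the equality $\mu'_E(wx) = T(\mu_E(ux),\mu_E(vx))$ is simply the definition of the updated edge membership under fuzzy contraction. The inequality $T(a,b)\le\min\{a,b\}$ then follows at once from the two bounds $T(a,b)\le a$ and $T(a,b)\le b$ supplied by Lemma~\ref{lem:Tfacts}; substituting $a=\mu_E(ux)$ and $b=\mu_E(vx)$ produces the displayed chain of inequalities.

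For the adjacency consequence, I would unwind the definition of the $\alpha$-cut. Suppose $x$ is adjacent to $w$ in $(G_f/e)^{(\alpha)}$, which by definition means $\mu'_E(wx)\ge\alpha$. Combining this with the inequality just established gives $\min\{\mu_E(ux),\mu_E(vx)\}\ge\mu'_E(wx)\ge\alpha$, whence both $\mu_E(ux)\ge\alpha$ and $\mu_E(vx)\ge\alpha$. By the definition of $G_f^{(\alpha)}$ these are exactly the conditions that $x$ is adjacent to $u$ and to $v$ in the original $\alpha$-cut, which is the claim.

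There is no genuine obstacle here; the entire content is carried by the monotonicity of $T$. The only point worth flagging is the boundary case in which $x$ is adjacent to just one endpoint, say $u$ but not $v$, so that $\mu_E(vx)=0$. In that situation the bound $T(a,b)\le b$ forces $\mu'_E(wx)\le 0$, hence $\mu'_E(wx)=0$, and the new edge $wx$ is absent from every $\alpha$-cut. This is consistent with the statement, since $x$ was never adjacent to both endpoints. Taken together, this confirms that fuzzy contraction can only preserve or remove $\alpha$-adjacencies and never manufacture new ones, which is precisely the structural robustness that the NP-hardness reductions of Section~\ref{sec:hardness} will exploit.
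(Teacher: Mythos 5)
Your argument is correct and follows the paper's own route exactly: the inequality is read off from the bounds $T(a,b)\le a$ and $T(a,b)\le b$ in Lemma~\ref{lem:Tfacts}, and the adjacency claim is the thresholded ($\alpha$-cut) restatement of that inequality. The paper compresses this into two sentences, so your version simply fills in the details (plus a harmless remark on the $\mu_E(vx)=0$ boundary case) without changing the approach.
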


\begin{proof}
The inequality follows from Lemma~\ref{lem:Tfacts}. The adjacency statement is the
$\alpha$-version of the same fact.
\end{proof}

\begin{thm}
\label{thm:order}
Let $e_1=u_1v_1$ and $e_2=u_2v_2$ be two edges with $\{u_1,v_1\}\cap\{u_2,v_2\}=\varnothing$.
Then fuzzy contractions commute up to isomorphism:
\[
(G_f/e_1)/e_2 \;\cong\; (G_f/e_2)/e_1.
\]
Moreover, for every $\alpha\in(0,1]$,
\[
\big((G_f/e_1)/e_2\big)^{(\alpha)} \;\cong\; \big((G_f^{(\alpha)}/e_1)/e_2\big)
\;\cong\; \big((G_f^{(\alpha)}/e_2)/e_1\big).
\]
\end{thm}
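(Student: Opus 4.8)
The plan is to exhibit a single bijection on node sets and verify that it preserves all node- and edge-memberships, from which the isomorphism $(G_f/e_1)/e_2 \cong (G_f/e_2)/e_1$ follows; the $\alpha$-cut identities will then be obtained by invoking Theorem~\ref{thm:commute} twice. Write $w_1$ for the node created by contracting $e_1=u_1v_1$ and $w_2$ for the node created by contracting $e_2=u_2v_2$. Since $\{u_1,v_1\}\cap\{u_2,v_2\}=\varnothing$, both orders of contraction yield the common node set $(V\setminus\{u_1,v_1,u_2,v_2\})\cup\{w_1,w_2\}$, and I would take the identity on the untouched nodes together with $w_1\mapsto w_1$ and $w_2\mapsto w_2$. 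The node memberships $\mu(w_1)=\min\{\mu_V(u_1),\mu_V(v_1)\}$ and $\mu(w_2)=\min\{\mu_V(u_2),\mu_V(v_2)\}$ depend only on the endpoints and not on the order, so they agree immediately.

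For edge memberships I would split into three cases. Edges joining two untouched nodes are altered by neither contraction, so they match trivially. For an edge from $w_1$ to an untouched node $x$, its membership is $T(\mu_E(u_1x),\mu_E(v_1x))$ in both orders, because contracting the disjoint edge $e_2$ leaves $\mu_E(u_1x)$ and $\mu_E(v_1x)$ unchanged; the case of $w_2x$ is symmetric. The only case requiring real work is the \emph{bridge} edge $w_1w_2$. Setting $a=\mu_E(u_1u_2)$, $b=\mu_E(u_1v_2)$, $c=\mu_E(v_1u_2)$, $d=\mu_E(v_1v_2)$, contracting $e_1$ first produces the intermediate memberships $T(a,c)$ on $w_1u_2$ and $T(b,d)$ on $w_1v_2$, whence the bridge receives $T(T(a,c),T(b,d))$; contracting $e_2$ first instead produces $T(T(a,b),T(c,d))$. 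By the associativity and commutativity of $T$ from Lemma~\ref{lem:Tfacts}, both expressions equal $T(a,b,c,d)$, so they coincide and the isomorphism is established.

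For the $\alpha$-cut chain I would first apply Theorem~\ref{thm:commute} to $e_1$ in $G_f$, giving $(G_f/e_1)^{(\alpha)}\cong G_f^{(\alpha)}/e_1$. Because $e_2$ is disjoint from $e_1$, its endpoints $u_2,v_2$ and its membership $\mu_E(e_2)$ survive the contraction of $e_1$ unchanged, so the threshold hypotheses of Theorem~\ref{thm:commute} still hold for $e_2$ in $G_f/e_1$; applying the theorem a second time yields $\big((G_f/e_1)/e_2\big)^{(\alpha)}\cong (G_f/e_1)^{(\alpha)}/e_2\cong (G_f^{(\alpha)}/e_1)/e_2$. The remaining isomorphism $(G_f^{(\alpha)}/e_1)/e_2\cong(G_f^{(\alpha)}/e_2)/e_1$ is the classical order-independence of disjoint crisp contractions, which also follows from the first part applied to the crisp graph $G_f^{(\alpha)}$.

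I expect the bridge edge to be the main obstacle, since it is the one place where the two contractions genuinely interact: confirming that the two nested $t$-norm expressions collapse to the same symmetric value $T(a,b,c,d)$ is exactly what associativity and commutativity deliver, and it is worth noting that the annihilator property $T(a,0)=0$ (a consequence of $T(a,b)\le b$) makes the bridge present only when all four cross-edges are, consistent with Theorem~\ref{thm:mono}. A secondary point requiring care is the bookkeeping in the $\alpha$-cut step, namely verifying that disjointness really does preserve the threshold hypotheses of Theorem~\ref{thm:commute} for the second contraction so that the theorem may legitimately be reapplied.
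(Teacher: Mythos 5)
Your proposal is correct and follows essentially the same route as the paper: the first isomorphism via associativity and commutativity of $T$ (your explicit computation $T(T(a,c),T(b,d))=T(T(a,b),T(c,d))$ for the bridge edge $w_1w_2$ is precisely the ``same pairs grouped differently'' argument the paper sketches), and the $\alpha$-cut chain by applying Theorem~\ref{thm:commute} twice. Your version is in fact more careful than the paper's, since you verify that disjointness preserves the threshold hypotheses of Theorem~\ref{thm:commute} for the second contraction, a point the paper leaves implicit.
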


\begin{proof}
New memberships after the two contractions are obtained by iterated application of $T$
to the same pairs $(\mu_E(\cdot),\mu_E(\cdot))$ but grouped differently.
Associativity and commutativity of $T$ (Lemma~\ref{lem:Tfacts}) yield equality.
For $\alpha$-cuts, combine Theorem~\ref{thm:commute} twice.
\end{proof}

\begin{thm}
\label{thm:hereditary}
Let $\Pi$ be a graph property hereditary under contractions and determined by $3$-connected components.
Fix $\alpha\in(0,1]$. If a set $F$ of edges satisfies
\[
\big(G_f^{(\alpha)}/F\big)\in\Pi,
\]
then \(\big((G_f/F)^{(\alpha)}\big)\in \Pi\).
\end{thm}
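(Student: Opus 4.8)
The plan is to compare the two crisp graphs $(G_f/F)^{(\alpha)}$ and $G_f^{(\alpha)}/F$ directly and then push membership in $\Pi$ across that comparison. First I would record the facts forced by the hypotheses: for $G_f^{(\alpha)}/F$ to be defined, every $e\in F$ must already be an edge of $G_f^{(\alpha)}$, so $\mu_E(e)\ge\alpha$, and the fuzzy-graph axiom $\mu_E(e)\le\min\{\mu_V(u),\mu_V(v)\}$ then forces both endpoints to have node-membership $\ge\alpha$. Hence every vertex and edge touched by the contraction survives the $\alpha$-cut, so contracting $F$ and then cutting yields the same (contracted) vertex set as cutting and then contracting $F$, and the only possible discrepancy lies in the edge set.

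The heart of the argument is to compare the adjacencies of a merged vertex. I would reduce the set contraction to single edges: $F$ decomposes into connected components, each collapsing to one new vertex, and by the order-independence of Theorem~\ref{thm:order} these may be performed in any order. Within a component $C=\{c_1,\dots,c_m\}$ that collapses to a vertex $w$, iterating the $t$-norm update and invoking associativity and commutativity (Lemma~\ref{lem:Tfacts}) gives, for an outside vertex $y$,
\[
\mu'_E(wy)=T\big(\mu_E(c_1 y),\dots,\mu_E(c_m y)\big).
\]
By Theorem~\ref{thm:mono} this satisfies $\mu'_E(wy)\le\min_i \mu_E(c_i y)$, so $wy$ survives the $\alpha$-cut only when $\mu_E(c_i y)\ge\alpha$ for every $i$, i.e.\ only when $y$ is adjacent to every vertex of $C$ in $G_f^{(\alpha)}$ --- precisely the adjacency rule producing $w$ in $G_f^{(\alpha)}/F$. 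Thus $(G_f/F)^{(\alpha)}$ is a spanning subgraph of $G_f^{(\alpha)}/F$, with equality when $T=\min$, where Theorem~\ref{thm:commute} applied component-by-component upgrades the inclusion to the isomorphism $(G_f/F)^{(\alpha)}\cong G_f^{(\alpha)}/F$.

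Finally I would transfer the property. When $T=\min$, the isomorphism together with the isomorphism-invariance of any graph property gives $(G_f/F)^{(\alpha)}\in\Pi$ immediately from $G_f^{(\alpha)}/F\in\Pi$. For a general $t$-norm I would instead use that $(G_f/F)^{(\alpha)}$ arises from $G_f^{(\alpha)}/F\in\Pi$ by deleting edges, and that a property determined by its $3$-connected components and hereditary (as with the minor-closed examples of planarity and series--parallelness that drive the hardness results) is preserved under such a passage, since removing edges cannot introduce a forbidden $3$-connected component. This yields $(G_f/F)^{(\alpha)}\in\Pi$ and matches the one-directional form of the statement.

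I expect the main obstacle to be the step from single edges to an arbitrary $F$ whose edges share endpoints. Theorems~\ref{thm:commute} and~\ref{thm:mono} are phrased for one edge, and Theorem~\ref{thm:order} covers only node-disjoint pairs, so the delicate points are: (i) verifying that the iterated $T$-combination along a connected component genuinely matches the crisp ``adjacent-to-all'' rule; (ii) checking that any edge whose membership drops below $\alpha$ during a stepwise contraction is exactly one later collapsed, so it does not corrupt the final $\alpha$-cut; and (iii) pinning down which closure property of $\Pi$ is actually used, since isomorphism-invariance suffices for $T=\min$ while the general $t$-norm case rests on the subgraph-heredity implicit in ``determined by $3$-connected components.''
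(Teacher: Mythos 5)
Your core route is the same as the paper's: reduce the claim to the commutation of contraction with the $\alpha$-cut and transfer $\Pi$ across the resulting comparison. The paper's proof is a one-line appeal to Theorem~\ref{thm:commute} ("$(G_f/F)^{(\alpha)}\cong G_f^{(\alpha)}/F$, hence immediate"), silently extending a single-edge statement to an arbitrary edge set $F$ and silently assuming the "iff" in that theorem. You do two things the paper does not. First, you carry out the single-edge-to-set extension explicitly, decomposing $F$ into components, iterating the $t$-norm via associativity (Lemma~\ref{lem:Tfacts}), and matching the resulting adjacency rule to the crisp one; this is a genuine improvement, since Theorems~\ref{thm:commute} and~\ref{thm:order} as stated do not cover edges of $F$ sharing endpoints. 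Second, you correctly observe that the isomorphism only holds when $T(a,b)\ge\alpha$ follows from $a,b\ge\alpha$ (true for $T=\min$, false for e.g.\ the product $t$-norm, where $T(0.6,0.6)=0.36<0.5$); the paper's Theorem~\ref{thm:commute} is actually incorrect as stated for general $t$-norms, and your spanning-subgraph inclusion is what survives.

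The one genuine gap is in your fallback for general $T$: you transfer $\Pi$ from $G_f^{(\alpha)}/F$ to its spanning subgraph $(G_f/F)^{(\alpha)}$ by deleting edges, but the hypotheses give only heredity under \emph{contractions} plus determination by $3$-connected components, and neither implies heredity under edge deletion (deleting edges can entirely reorganize the $3$-connected components, so "removing edges cannot introduce a forbidden $3$-connected component" does not follow). You flag this yourself as delicate point (iii), but it is not resolved; as written, the general-$T$ branch proves the theorem only for deletion-hereditary $\Pi$ (which does cover planarity and series--parallelness). To close the statement as given for arbitrary $T$ you would either need to restrict to $T=\min$ (which is what the paper implicitly does, and what its own definition suggests with "commonly the minimum") or add deletion-heredity to the hypotheses.
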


\begin{proof}
By Theorem~\ref{thm:commute}, \((G_f/F)^{(\alpha)} \cong (G_f^{(\alpha)}/F)\).
Hence the claim is immediate.
\end{proof}

\begin{thm}
\label{thm:no-new}
Fix $\alpha\in(0,1]$. For any edge $e=uv$ with $\mu_E(e)\ge \alpha$,
\[
E\big((G_f/e)^{(\alpha)}\big)\;\subseteq\; E\big(G_f^{(\alpha)}/e\big) \;=\; E\big((G_f/e)^{(\alpha)}\big),
\]
and for any $e$ with $\mu_E(e)<\alpha$ (so $e\notin G_f^{(\alpha)}$),
\[
(G_f/e)^{(\alpha)}\;\cong\; G_f^{(\alpha)}.
\]
\end{thm}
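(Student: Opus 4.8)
The plan is to derive both displays from the two structural facts already established, the commutation of contraction with $\alpha$-cuts (Theorem~\ref{thm:commute}) and the $t$-norm monotonicity bound (Theorem~\ref{thm:mono}), and then to track carefully what each operation does to node and edge memberships at the fixed level $\alpha$. For the first display (the case $\mu_E(e)\ge\alpha$) I would first prove the inclusion $E\big((G_f/e)^{(\alpha)}\big)\subseteq E\big(G_f^{(\alpha)}/e\big)$ directly, since this is the genuine ``no new edge'' content. Take any edge of $(G_f/e)^{(\alpha)}$: if it avoids the merged vertex $w$ it is an edge of $G_f^{(\alpha)}$ untouched by the contraction, hence an edge of $G_f^{(\alpha)}/e$. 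If it is incident to $w$, say $wx$, then its membership is $\mu'_E(wx)=T(\mu_E(ux),\mu_E(vx))\ge\alpha$; by Theorem~\ref{thm:mono} this forces $\mu_E(ux)\ge\alpha$ and $\mu_E(vx)\ge\alpha$, so both $ux$ and $vx$ survive in $G_f^{(\alpha)}$ and $wx$ is exactly the edge produced by contracting $e$ in the crisp cut. The trailing equality $E\big(G_f^{(\alpha)}/e\big)=E\big((G_f/e)^{(\alpha)}\big)$ is then immediate from Theorem~\ref{thm:commute}, which already supplies the full isomorphism $(G_f/e)^{(\alpha)}\cong G_f^{(\alpha)}/e$; the chain of relations thus collapses to an edge-set equality, with the inclusion recording the contraction-specific reason no spurious edges arise.

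For the second display (the case $\mu_E(e)<\alpha$) I would argue that the contraction is invisible at level $\alpha$ by inspecting which objects it can alter. Contraction changes only the membership of the merged vertex, $\mu(w)=\min\{\mu(u),\mu(v)\}$, and the incident edge memberships $\mu'_E(wx)=T(\mu_E(ux),\mu_E(vx))$; every edge disjoint from $\{u,v\}$ keeps its membership and hence its status in the cut. Using the fuzzy-graph constraint $\mu_E(e)\le\min\{\mu_V(u),\mu_V(v)\}$ together with the bound $T(\mu_E(ux),\mu_E(vx))\le\min\{\mu_E(ux),\mu_E(vx)\}$ from Theorem~\ref{thm:mono}, one sees that no incident edge of $w$ can cross the $\alpha$-threshold that was not already present at $u$ and $v$ simultaneously. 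One then checks that the vertex set of the cut is unchanged, identifying the trace of $w$ at level $\alpha$ with that of the original endpoints, and concludes $(G_f/e)^{(\alpha)}\cong G_f^{(\alpha)}$.

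The step I expect to be the main obstacle is precisely this last piece of bookkeeping. Because $\mu(w)=\min\{\mu(u),\mu(v)\}$, whether the merged vertex survives the cut is governed by the node memberships, not by $\mu_E(e)$ alone, so the sub-threshold hypothesis on the \emph{edge} does not by itself settle the fate of the \emph{vertex}. The delicate point is therefore to pin down the vertex correspondence between $(G_f/e)^{(\alpha)}$ and $G_f^{(\alpha)}$: one must verify that $w$ appears in the contracted cut exactly when the corresponding original endpoints appear in $G_f^{(\alpha)}$, which is where the argument must either invoke the regime in which both endpoints lie below $\alpha$ whenever $\mu_E(e)<\alpha$, or otherwise fix the identification explicitly so that the asserted isomorphism is realized as an equality of crisp graphs. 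Once this correspondence is secured, both displays follow formally from Theorems~\ref{thm:commute} and~\ref{thm:mono}.
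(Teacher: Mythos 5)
Your treatment of the first display matches the paper's in substance: the paper simply cites Theorem~\ref{thm:commute} for the equality, and your direct verification of the inclusion via Theorem~\ref{thm:mono} (an edge $wx$ of $(G_f/e)^{(\alpha)}$ forces $\mu_E(ux)\ge\alpha$ and $\mu_E(vx)\ge\alpha$) is exactly the ``no new edges'' mechanism that underlies that citation. No issues there.

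For the second display, the obstacle you flag at the end is not just a bookkeeping nuisance --- it is a genuine gap, and in fact the claim as stated can fail under the paper's own conventions. Take $V=\{u,v,x\}$ with $\mu_V(u)=\mu_V(x)=1$, $\mu_V(v)=0.3$, and $\mu_E(uv)=0.2$, $\mu_E(ux)=1$, $\mu_E(vx)=0.3$, with $\alpha=0.5$. Then $G_f^{(\alpha)}$ has vertices $\{u,x\}$ and the edge $ux$. Contracting $e=uv$ gives $\mu_V(w)=\min\{1,0.3\}=0.3<\alpha$ and $\mu'_E(wx)=T(1,0.3)\le 0.3<\alpha$, so $(G_f/e)^{(\alpha)}$ is the single isolated vertex $x$: the high-membership endpoint $u$ has been dragged below threshold by the merge, and the edge $ux$ has vanished. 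The paper's proof glosses over exactly this case (``$e$ and possibly one endpoint are absent \dots thus the $\alpha$-cut is unchanged''), silently assuming that the surviving endpoint's incidences are unaffected, which the $\min$ rule for $\mu_V(w)$ and the $t$-norm rule for $\mu'_E(wx)$ do not guarantee. So your instinct is right: to close the argument one must either add a hypothesis that forces both endpoints below $\alpha$ (or all node memberships to $1$, as in the hardness constructions later in the paper, where every membership used is $1$), or adopt a contraction convention in which the merged vertex inherits the larger endpoint membership and edges not meeting the sub-threshold endpoint keep their original values. As written, neither your proposal nor the paper's proof establishes the isomorphism $(G_f/e)^{(\alpha)}\cong G_f^{(\alpha)}$ in full generality; the honest conclusion is that the second claim needs a strengthened hypothesis rather than a cleverer argument.
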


\begin{proof}
The first equality is Theorem~\ref{thm:commute}. If $\mu_E(e)<\alpha$, then $e$ and
possibly one endpoint are absent from $G_f^{(\alpha)}$, so contracting $e$ in the fuzzy
graph does not produce any edge of membership $\ge\alpha$ that was not already present
(Thm.~\ref{thm:mono}); thus the $\alpha$-cut is unchanged.
\end{proof}

\begin{thm}
\label{thm:bridging}
Let $e=uv$ be contracted to $w$.
For any $x\in V\setminus\{u,v\}$, if $\mu_E(ux)<\alpha$ or $\mu_E(vx)<\alpha$ then
$\mu'_E(wx)<\alpha$. Equivalently,
\[
N^{(\alpha)}_{G_f/e}(w) = N^{(\alpha)}_{G_f}(u)\cap N^{(\alpha)}_{G_f}(v),
\]
where $N^{(\alpha)}_{G_f}(u)=\{x:\mu_E(ux)\ge\alpha\}$ is the $\alpha$-neighborhood.
\end{thm}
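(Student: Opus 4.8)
The plan is to derive both assertions directly from the membership formula $\mu'_E(wx)=T(\mu_E(ux),\mu_E(vx))$ together with the basic $t$-norm inequality of Lemma~\ref{lem:Tfacts}, and then to recast the pointwise claim as the set identity for $\alpha$-neighborhoods. First I would prove the implication: fix $x\in V\setminus\{u,v\}$ and suppose $\mu_E(ux)<\alpha$ or $\mu_E(vx)<\alpha$. Since $T(a,b)\le\min\{a,b\}$ by Lemma~\ref{lem:Tfacts}, we get $\mu'_E(wx)=T(\mu_E(ux),\mu_E(vx))\le\min\{\mu_E(ux),\mu_E(vx)\}<\alpha$. This is exactly the reinforced form of the adjacency statement already recorded in Theorem~\ref{thm:mono}, and it requires nothing beyond the $t$-norm inequality.

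Next I would translate this into the neighborhood identity. By definition $x\in N^{(\alpha)}_{G_f/e}(w)$ iff $\mu'_E(wx)\ge\alpha$, while $x\in N^{(\alpha)}_{G_f}(u)\cap N^{(\alpha)}_{G_f}(v)$ iff both $\mu_E(ux)\ge\alpha$ and $\mu_E(vx)\ge\alpha$. Taking the contrapositive of the first step yields the inclusion $N^{(\alpha)}_{G_f/e}(w)\subseteq N^{(\alpha)}_{G_f}(u)\cap N^{(\alpha)}_{G_f}(v)$: if $x$ is an $\alpha$-neighbor of $w$ after contraction, then neither $\mu_E(ux)$ nor $\mu_E(vx)$ can lie below $\alpha$, so $x$ is an $\alpha$-neighbor of both $u$ and $v$ in $G_f$. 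This half is therefore purely formal.

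The reverse inclusion is where the genuine content—and the main obstacle—lies. To obtain $N^{(\alpha)}_{G_f}(u)\cap N^{(\alpha)}_{G_f}(v)\subseteq N^{(\alpha)}_{G_f/e}(w)$ I would need $\mu_E(ux)\ge\alpha$ and $\mu_E(vx)\ge\alpha$ to force $T(\mu_E(ux),\mu_E(vx))\ge\alpha$. This is immediate precisely when $T$ is idempotent, i.e. when $T=\min$, since then $T(a,b)=\min\{a,b\}\ge\alpha$ whenever $a,b\ge\alpha$. For a strict $t$-norm such as the product, however, $T(a,b)$ can fall below $\min\{a,b\}$ and even below $\alpha$ (e.g. $a=b=0.8$ with $\alpha=0.7$ gives $T=0.64<\alpha$), so the equality can fail. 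Accordingly, I would establish the clean identity under the standing convention $T=\min$ (consistent with the node rule $\mu(w)=\min\{\mu(u),\mu(v)\}$ and with the ``iff'' used in Theorem~\ref{thm:commute}); for an arbitrary $t$-norm I would state only the robust half, namely the pointwise implication and the inclusion $N^{(\alpha)}_{G_f/e}(w)\subseteq N^{(\alpha)}_{G_f}(u)\cap N^{(\alpha)}_{G_f}(v)$, which is the direction the subsequent hardness reductions actually invoke.
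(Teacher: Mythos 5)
Your argument follows the same route as the paper's own (one-line) proof for the forward implication: apply $T(a,b)\le\min\{a,b\}$ from Lemma~\ref{lem:Tfacts} to the update rule $\mu'_E(wx)=T(\mu_E(ux),\mu_E(vx))$, then threshold at $\alpha$. That part is correct and is all the paper itself writes down.

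Where you go beyond the paper is in the reverse inclusion, and your observation there is a genuine and valuable refinement rather than a detour. The paper's proof asserts that ``thresholding at $\alpha$ gives the intersection identity,'' but the only fact it invokes, $T(a,b)\le\min\{a,b\}$, yields just the inclusion $N^{(\alpha)}_{G_f/e}(w)\subseteq N^{(\alpha)}_{G_f}(u)\cap N^{(\alpha)}_{G_f}(v)$; the stated \emph{equality} additionally needs $T(a,b)\ge\alpha$ whenever $a,b\ge\alpha$, which holds for $T=\min$ (or any idempotent $t$-norm) but fails for, e.g., the product $t$-norm, exactly as your counterexample $a=b=0.8$, $\alpha=0.7$ shows. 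The same unstated hypothesis is silently used in the ``iff'' of Theorem~\ref{thm:commute}, on which several later results depend. So your proposal is not merely correct: by making the $T=\min$ convention explicit for the equality and retaining only the robust inclusion for general $t$-norms, it repairs a gap in the paper's own argument while preserving everything the subsequent hardness reductions actually need.
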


\begin{proof}
Immediate from $\mu'_E(wx)=T(\mu_E(ux),\mu_E(vx))$ and $T(a,b)\le\min\{a,b\}$.
Thresholding at $\alpha$ gives the intersection identity.
\end{proof}

\begin{thm}
\label{thm:cut}
Fix $\alpha\in(0,1]$ and let $\lambda_\alpha(G)$ denote the (edge-)connectivity of the crisp graph $G$.
If $\mu_E(e)\ge\alpha$, then
\[
\lambda_\alpha\big( (G_f/e)^{(\alpha)} \big) \;\ge\; \min\left\{ \lambda_\alpha\big(G_f^{(\alpha)}\big),\,2\right\}.
\]
\end{thm}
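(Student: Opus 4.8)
The plan is to strip away the fuzzy layer first and then settle a purely crisp inequality about edge-connectivity under contraction. The reduction is immediate: by the fuzzy-graph axiom $\mu_E(e)\le\min\{\mu_V(u),\mu_V(v)\}$, the hypothesis $\mu_E(e)\ge\alpha$ already forces $\mu_V(u),\mu_V(v)\ge\alpha$, so the edge and both endpoints survive in $G_f^{(\alpha)}$ and the hypotheses of Theorem~\ref{thm:commute} are met. That theorem gives $(G_f/e)^{(\alpha)}\cong G_f^{(\alpha)}/e$, so, writing $H:=G_f^{(\alpha)}$, the claim collapses to the classical statement $\lambda\big(H/e\big)\ge\min\{\lambda(H),2\}$ for a crisp graph $H$ and an edge $e=uv$. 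From here no membership values appear and I can argue entirely with crisp edge cuts.

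For the crisp inequality I would work with the correspondence between edge cuts of $H/e$ and vertex bipartitions of $H$ that keep $u$ and $v$ on the same side. First I would fix a minimum edge cut $\partial_{H/e}(S)$ realizing $\lambda(H/e)$, with the merged vertex $w\in S$, and lift it to $S':=(S\setminus\{w\})\cup\{u,v\}$; since $\overline{S'}=\overline{S}\neq\varnothing$, the pair $(S',\overline{S'})$ is a genuine cut of $H$ and therefore $|\partial_H(S')|\ge\lambda(H)$. The next step is an edge-by-edge comparison: a crossing edge of $H/e$ not incident to $w$ corresponds to exactly one crossing edge of $H$, whereas a crossing edge $wx$ pulls back to whichever of $ux,vx$ lie in $H$. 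Counting across this correspondence is what converts $|\partial_H(S')|\ge\lambda(H)$ into a lower bound on $\lambda(H/e)$.

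The hard part will be the loss incurred when two crossing edges merge into one: if $u$ and $v$ share a neighbour $x$ on the far side of the cut, contraction can collapse the pair $ux,vx$ into the single edge $wx$, so a cut of size $2$ in $H$ can shrink to a cut of size $1$ in $H/e$. This is exactly why the bound is capped at $2$ rather than matching $\lambda(H)$, and it forces a case split. When $\lambda(H)\le 1$ the inequality reduces to the fact that contraction preserves connectivity, giving $\lambda(H/e)\ge 1=\min\{\lambda(H),2\}$. When $\lambda(H)\ge 2$ I would argue that no single crossing edge can survive alone, since $u$ and $v$ each lie on a cycle of the bridgeless graph $H$ and hence contribute independent crossings; establishing that at least two independent crossings persist through the merge is the delicate point, and it is where degenerate contractions (those reducing $H$ to at most two vertices) must be excluded or handled separately. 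If instead one adopts the multigraph convention in which parallel edges are retained, this correspondence becomes size-preserving, $\lambda(H/e)\ge\lambda(H)$ follows directly, and the capped bound is subsumed.
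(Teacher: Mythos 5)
Your first step---invoking Theorem~\ref{thm:commute} to replace $(G_f/e)^{(\alpha)}$ by $H/e$ with $H:=G_f^{(\alpha)}$---is exactly the paper's proof; the paper then simply declares the crisp inequality $\lambda(H/e)\ge\min\{\lambda(H),2\}$ to be ``well known'' and stops. You instead try to prove that crisp inequality, and you correctly isolate the only dangerous phenomenon: two crossing edges $ux,vx$ of a lifted cut merging into the single edge $wx$. But your resolution of the $\lambda(H)\ge2$ case does not close, and in fact cannot: take $H=K_3$ on $\{u,v,x\}$ (equivalently, a fuzzy triangle with all memberships $1$ and $\alpha=1$, which is literally the configuration drawn in Figure~\ref{fig:fuzzy-abstract}). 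Then $\lambda(H)=2$, yet the contraction produces the single edge $wx$ with membership $T(1,1)=1$, so $\lambda(H/e)=1<2=\min\{\lambda(H),2\}$. The claim that ``$u$ and $v$ each lie on a cycle and hence contribute independent crossings'' fails precisely when that cycle has length three, i.e.\ when a far-side neighbour $x$ is adjacent to both $u$ and $v$---the very case you flagged as delicate. Your escape via the multigraph convention (retaining $ux$ and $vx$ as parallel edges, which would indeed give $\lambda(H/e)\ge\lambda(H)$ outright) is not available here, because the paper's own definition of fuzzy contraction assigns the merged incidence a \emph{single} membership $\tilde E'(w,x)=T(\tilde E(u,x),\tilde E(v,x))$, forcing a simple graph.

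So the gap you identified is genuine, and it is a gap in the theorem itself, not merely in your argument; the paper's one-line appeal to a ``well known'' fact conceals the same problem. What does survive under the paper's definitions is $\lambda(H/e)\ge\min\{\lambda(H),1\}$ (contraction preserves connectivity), or, using exactly the cut-lifting count you set up, $\lambda(H/e)\ge\lceil\lambda(H)/2\rceil$, since each crossing edge of $H/e$ pulls back to at most two crossing edges of $H$. To recover the stated bound with cap $2$ one must either add a hypothesis such as $N^{(\alpha)}_{G_f}(u)\cap N^{(\alpha)}_{G_f}(v)=\varnothing$ (so no merging occurs and the correspondence is size-preserving) or redefine the contraction to keep multiplicities. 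Your strategy is sound; the statement it is asked to prove is not true as written.
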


\begin{proof}
In the crisp setting, contracting an edge does not reduce edge-connectivity below $2$,
and never below the original connectivity if it was $2$ or more (well known).
By Theorem~\ref{thm:commute}, $(G_f/e)^{(\alpha)}\cong G_f^{(\alpha)}/e$, hence the bound follows.
\end{proof}

\begin{thm}
\label{thm:3cc}
Let $\Pi$ be determined by $3$-connected components and hereditary under contractions.
Fix $\alpha\in(0,1]$ and let $F$ be a set of edges with $\mu_E(e)\ge\alpha$ for all $e\in F$.
Then
\[
(G_f/F)^{(\alpha)}\in \Pi \quad\Longleftrightarrow\quad
\text{every $3$-connected component of } G_f^{(\alpha)}/F \text{ belongs to } \Pi.
\]
\end{thm}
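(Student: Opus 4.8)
The plan is to collapse the fuzzy biconditional onto a purely crisp one and then read off the conclusion from the definition of being determined by $3$-connected components. The central observation is that both sides of the claimed equivalence ultimately depend only on the single crisp graph $G_f^{(\alpha)}/F$, so the entire statement will follow once I establish the isomorphism $(G_f/F)^{(\alpha)}\cong G_f^{(\alpha)}/F$ and invoke the decomposition definition.

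First I would check that every edge of $F$ genuinely survives into the $\alpha$-cut and can be contracted there. Since each $e=uv\in F$ satisfies $\mu_E(e)\ge\alpha$, the fuzzy-graph axiom $\mu_E(u,v)\le\min\{\mu_V(u),\mu_V(v)\}$ forces $\mu_V(u),\mu_V(v)\ge\alpha$, so $e$ and both of its endpoints lie in $G_f^{(\alpha)}$. This is exactly the hypothesis needed to apply Theorem~\ref{thm:commute} to a single edge. To pass from one edge to the whole set $F$, I would invoke the iterated form in Theorem~\ref{thm:order}: contracting the edges of $F$ in the fuzzy graph and then cutting at $\alpha$ yields the same crisp graph, up to isomorphism, as cutting first and then contracting, and order-independence of the $t$-norm guarantees the outcome does not depend on the sequence in which the edges of $F$ are processed. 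The result is the isomorphism
\[
(G_f/F)^{(\alpha)}\;\cong\;G_f^{(\alpha)}/F .
\]

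With the fuzzy side thus reduced to the crisp graph $H:=G_f^{(\alpha)}/F$, I would finish by applying the defining property of $\Pi$. Because $\Pi$ is determined by $3$-connected components, a crisp graph lies in $\Pi$ if and only if each of its $3$-connected components lies in $\Pi$; applied to $H$ this says $H\in\Pi$ iff every $3$-connected component of $H$ belongs to $\Pi$. Since $\Pi$ is an isomorphism-invariant property and the displayed isomorphism carries the canonical $3$-connected (Tutte) decomposition of $(G_f/F)^{(\alpha)}$ onto that of $H$, membership of $(G_f/F)^{(\alpha)}$ in $\Pi$ coincides with membership of $H$ in $\Pi$, and the two graphs have the same family of components. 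Chaining these equivalences yields the stated biconditional; note that Theorem~\ref{thm:hereditary} already supplies the implication from $H\in\Pi$ to $(G_f/F)^{(\alpha)}\in\Pi$, while the full isomorphism upgrades this to an equivalence, and the hereditary-under-contractions hypothesis keeps $\Pi$ within the decomposition-based framework used throughout Section~\ref{sec:contraction}.

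The step I expect to require the most care is not the crisp definition, which is essentially tautological once $H$ is in hand, but the clean transfer of the $3$-connected component decomposition across the isomorphism together with the bookkeeping involved in contracting an entire set $F$. In particular, if $F$ contains a cycle or two edges sharing a vertex, the contraction may create loops or parallel edges, so I would fix the convention (matching the crisp Asano--Hirata setting) under which $G_f^{(\alpha)}/F$ is formed, and verify that Theorem~\ref{thm:order} still delivers a well-defined isomorphism class under that convention. Once the decomposition is confirmed to be canonical and isomorphism-invariant, the equivalence is forced.
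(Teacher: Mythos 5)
Your proposal is correct and follows essentially the same route as the paper: reduce to the crisp graph via the isomorphism $(G_f/F)^{(\alpha)}\cong G_f^{(\alpha)}/F$ from Theorem~\ref{thm:commute} (iterated over $F$ via Theorem~\ref{thm:order}), then apply the Asano--Hirata characterization of properties determined by $3$-connected components. Your additional care about endpoint memberships surviving the $\alpha$-cut and about the convention for contracting a whole edge set is welcome detail that the paper's two-line proof leaves implicit.
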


\begin{proof}
By Theorem~\ref{thm:commute}, $(G_f/F)^{(\alpha)}\cong G_f^{(\alpha)}/F$.
Apply the Asano–Hirata characterization in the crisp graph $G_f^{(\alpha)}/F$.
\end{proof}

\section{Fuzzy Graph Constructions for NP-Hardness}\label{sec:hardness}
\noindent To study hereditary properties under fuzzy edge contractions, we begin with minimal counterexamples—fuzzy graphs $L$ with full membership edges that violate a property $\sim$ while having all nodesof degree three in thresholded edges. From $L$, The construction shows that $L'$ by replacing edges with fuzzy triangles, and $L(k)$, $L(k;k')$ by identifying nodes and adding fully fuzzy edges, generalizing the minimal configuration. These constructions preserve key structural features, such as 3-connectivity, while controlling the presence of forbidden subgraphs.  This framework extends classical minimal-counterexample techniques to fuzzy graphs, providing a foundation for analyzing hereditary properties under contraction. 

\begin{dfn}
Let $L$ be the minimal fuzzy graph violating property $\sim$ with edge memberships $\ge 1$. All nodes of $L$ have degree 3 in thresholded edges (edges with $\mu \ge 1$).
\end{dfn}

\begin{dfn}
\begin{itemize}
    \item $L'$ is obtained from $L$ by replacing edges with fuzzy triangles of full membership.
    \item $L(k)$ and $L(k;k')$ are constructed by identifying nodes and adding fuzzy edges, mirroring the classical construction, all with full membership.
\end{itemize}
\end{dfn}

\begin{prop}
Let $L', L(k), L(k;k')$ be fuzzy graphs as defined. Then:
\begin{enumerate}
    \item $L(k;k')$ is fuzzy 3-connected.
    \item $L' \notin \sim$ and $L' \cup \{(u,v)\} \notin \sim$.
    \item Contracting any edge in $L(k;k')$ yields a graph in $\sim$.
    \item Contracting key edges $\{(u(0),v(0)), (v(0),v(k-1))\}$ keeps the graph in $\sim$.
\end{enumerate}
\end{prop}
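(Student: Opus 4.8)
The plan is to exploit the defining feature of the construction: every edge of $L'$, $L(k)$, and $L(k;k')$ carries full membership, so that for every threshold $\alpha\in(0,1]$ the $\alpha$-cut recovers exactly the underlying crisp skeleton. Writing $G$ for the crisp skeleton of $L(k;k')$ (the graph on the full-membership edges), we have $L(k;k')^{(\alpha)}\cong G$ for all $\alpha\le 1$, and analogously for $L'$ and $L(k)$. The classical Asano--Hirata constructions~\cite{AsanoHirata1983} already establish that the crisp $G$ is $3$-connected, that the crisp $L'$ (and $L'$ augmented by the edge $uv$) violates $\sim$, and that contracting any edge of $G$ yields a graph whose $3$-connected components all lie in $\sim$. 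My task is therefore to lift these crisp facts through the commutation machinery of Section~\ref{sec:contraction} rather than to reprove them from scratch.

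For part (1) I would argue that any $2$-node set separating the fuzzy graph corresponds, at threshold $\alpha=1$, to a $2$-node cut of the crisp $G$; since $G$ is $3$-connected it admits no such cut, so no candidate $2$-set disconnects the full-membership $1$-cut, which is precisely what the fuzzy $3$-connectivity definition requires. For part (2), $L$ is the minimal crisp counterexample to $\sim$ and the triangle-replacement that produces $L'$ preserves the forbidden structure (a standard feature of the Asano--Hirata reduction), so the crisp skeleton of $L'$ fails $\sim$; full membership then forces $L'\notin\sim$ under the threshold semantics, and the identical argument applied to the augmented graph gives $L'\cup\{(u,v)\}\notin\sim$.

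For parts (3) and (4) the decisive tool is Theorem~\ref{thm:commute}: for any full-membership edge $e$ one has $(L(k;k')/e)^{(\alpha)}\cong L(k;k')^{(\alpha)}/e\cong G/e$. I would first record that contraction keeps the relevant incidences at full membership: by Lemma~\ref{lem:Tfacts} one has $T(1,1)=1$, so whenever a third node is joined to both endpoints of $e$ by full-membership edges the merged edge again has membership $1$, and hence the $1$-cut of the contracted fuzzy graph really is the crisp $G/e$. Theorem~\ref{thm:3cc} (equivalently Theorem~\ref{thm:hereditary}) then reduces membership in $\sim$ to a statement about the $3$-connected components of $G/e$, and the Asano--Hirata analysis of the minimal configuration guarantees these all belong to $\sim$. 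Part (4) is the same argument specialized to the distinguished edges $(u(0),v(0))$ and $(v(0),v(k-1))$, so it follows immediately once the general contraction step is in place.

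The main obstacle I anticipate is reconciling the $t$-norm (intersection) semantics of fuzzy contraction with the union semantics of classical contraction underlying Asano--Hirata: Theorem~\ref{thm:bridging} gives $N^{(\alpha)}_{G_f/e}(w)=N^{(\alpha)}_{G_f}(u)\cap N^{(\alpha)}_{G_f}(v)$, an intersection, whereas the crisp construction merges neighborhoods by union. I would resolve this by restricting attention to edges $e=uv$ whose endpoints share identical full-membership neighborhoods outside $\{u,v\}$, which is arranged to hold for the triangle gadgets populating $L(k;k')$; there intersection and union agree, so the crisp $G/e$ invoked by Asano--Hirata is recovered exactly. Verifying this neighborhood-matching separately for each edge class of $L(k;k')$ is the delicate case analysis that carries the proof, and it is where I expect the bulk of the technical work to lie.
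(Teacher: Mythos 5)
Your route is genuinely different from the paper's. The paper proves all four items by direct, informal structural arguments about the fuzzy gadgets themselves (identification of copies preserves 3-connectivity, the triangle replacement preserves the forbidden configuration, every edge lies in a redundancy-reducing triangle, the key edges align the 3-connected components); it never invokes Theorem~\ref{thm:commute} or Theorem~\ref{thm:3cc} inside this proof. You instead reduce every item to the crisp Asano--Hirata facts about the full-membership skeleton $G$ via the $\alpha$-cut commutation machinery of Section~\ref{sec:contraction}. That is a cleaner architecture in principle, and you have correctly identified the one place where it creaks: under the paper's definition, the contracted node receives the \emph{intersection} of the endpoints' $\alpha$-neighborhoods (Theorem~\ref{thm:bridging}), whereas the crisp contraction underlying Asano--Hirata gives the \emph{union}. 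Flagging that tension, which Theorem~\ref{thm:commute} itself glosses over, is to your credit.

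However, your proposed resolution leaves a concrete gap in item (3). You restrict to edges $e=uv$ whose endpoints have identical full-membership neighborhoods outside $\{u,v\}$, so that intersection and union coincide and $\bigl(L(k;k')/e\bigr)^{(1)}$ is literally the crisp $G/e$. But item (3) asserts the conclusion for \emph{every} edge of $L(k;k')$, and the neighborhood-matching condition fails for typical gadget edges: when an edge $ab$ of $L$ is replaced by a triangle on $a,b,c$, the endpoints $a$ and $b$ share only $c$, while each keeps private neighbors from the other gadgets incident to it. Contracting $ab$ under the min $t$-norm then \emph{deletes} those private adjacencies instead of merging them, so the result is not the Asano--Hirata graph $G/ab$ and cannot be imported through Theorem~\ref{thm:commute}. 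To close this case you would need a separate argument that the intersection-contracted graph still lies in $\sim$ --- for instance that it is obtainable from the union-contracted graph by further operations under which $\sim$ is hereditary --- but the Proposition only grants heredity under contractions, not deletions, so this does not follow automatically. Until that case analysis is actually carried out (you defer it as future work), item (3) is not established by your argument.
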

\begin{proof}
We prove each item in turn.
\begin{enumerate}
    \item  $L(k;k')$ is fuzzy 3-connected. By construction, $L(k;k')$ is obtained by identifying nodes across multiple copies of $L(m)$ (which are fully fuzzy 3-connected) and adding fuzzy edges of full membership.  
 Identification preserves 3-connectivity because any 2-node cut in one copy cannot disconnect the graph due to connections to other copies.  
 Adding edges between copies further increases connectivity.  
Thus, thresholding at membership 1, the resulting fuzzy graph is 3-connected. Therefore, $L(k;k')$ is fuzzy 3-connected.

\item $L' \notin \sim$ and $L' \cup \{(u,v)\} \notin \sim$.

 $L'$ is constructed from $L$, the minimal fuzzy graph violating property $\sim$, by replacing edges with fuzzy triangles of full membership.  

Any contraction-free version of $L'$ preserves the violation of $\sim$ because the new edges do not remove the structural configuration causing the violation.  
 Adding an edge $(u,v)$ does not resolve the violation since $\sim$ is hereditary under contractions but not under arbitrary edge additions.  
Hence, both $L'$ and $L' \cup \{(u,v)\}$ do not satisfy property $\sim$.

\item  Contracting any edge in $L(k;k')$ yields a graph in $\sim$.

 By the design of $L(k;k')$, every edge lies in a fuzzy triangle or a connecting structure whose contraction reduces redundancy.   Contraction of any edge merges nodes in such a way that the minimal forbidden configuration from $L$ is destroyed or reduced, resulting in a fuzzy graph satisfying $\sim$.   This mirrors the classical argument where edge contraction in $L(k;k')$ eliminates violations while preserving connectivity.

\item  Contracting key edges $\{(u(0),v(0)), (v(0),v(k-1))\}$ keeps the graph in $\sim$.  These key edges are chosen to contract the nodes across copies of $L(m)$ so that the 3-connected components align and the minimal forbidden subgraph disappears.  
 Since property $\sim$ is determined by 3-connected components, the resulting contracted fuzzy graph has all 3-connected components satisfying $\sim$.  
 Therefore, contracting precisely these key edges guarantees $L(k;k') \in \sim$.
\end{enumerate}
\end{proof}

\section{NP-Hardness of Fuzzy Edge Contraction}
\noindent The present work studies the computational complexity of the fuzzy edge-contraction problem (Fuzzy PEC($\sim$)) for a property $\sim$ that is hereditary under fuzzy contractions and determined by fuzzy 3-connected components. To establish hardness, a construction is provided, fuzzy graphs that encode instances of the classical Planar Connected Node Cover (PCNC) problem, which is known to be NP-hard. By carefully attaching copies of minimal counterexamples $L(m)$ and using edges of full membership, we ensure that any solution to Fuzzy PEC($\sim$) corresponds to a connected node cover in the original instance. This reduction shows that finding a set of fuzzy edge contractions that enforces property $\sim$ is computationally intractable. Moreover, the NP-hardness holds even when restricted to fuzzy 3-connected graphs, demonstrating the problem’s intrinsic difficulty in highly connected fuzzy networks.

\begin{thm}
\label{thm1}
Let $\sim$ be a property on fuzzy graphs that is hereditary under fuzzy edge contractions and determined by fuzzy 3-connected components. Then the fuzzy edge-contraction problem (Fuzzy PEC($\sim$)) is NP-hard.
\end{thm}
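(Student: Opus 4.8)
The plan is to prove NP-hardness by a polynomial-time many-one reduction from the classical crisp edge-contraction problem PEC($\Pi$), where $\Pi$ is the crisp shadow of $\sim$, i.e.\ the property $\sim$ restricted to fuzzy graphs all of whose node and edge memberships equal $1$. Since $\sim$ is hereditary under fuzzy contractions and determined by fuzzy $3$-connected components, $\Pi$ is hereditary under (crisp) contractions and determined by $3$-connected components, so the Asano--Hirata theorem recalled in Section~\ref{sec:prelim} already certifies that PEC($\Pi$) is NP-hard. The engine of the reduction is the commutation identity $(G_f/F)^{(\alpha)}\cong G_f^{(\alpha)}/F$ from Theorem~\ref{thm:commute} (its set version appears in Theorem~\ref{thm:3cc}, with order-independence supplied by Theorem~\ref{thm:order}), which lets a fuzzy contraction problem at a fixed threshold be read off entirely in the crisp $\alpha$-cut.

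First I would set up the embedding. Given a crisp instance $(G,k)$ of PEC($\Pi$), define the fuzzy graph $\hat G=(\mu_V,\mu_E)$ by $\mu_V(v)=1$ for every node of $G$ and $\mu_E(e)=1$ for every edge of $G$; this is admissible because $\mu_E(u,v)=1\le\min\{\mu_V(u),\mu_V(v)\}=1$. Fixing the threshold $\alpha_0=1$, every cut satisfies $\hat G^{(\alpha)}\cong G$ for all $\alpha\in(0,1]$, so the threshold and all-$\alpha$ semantics coincide on $\hat G$ and both collapse to the single crisp requirement. The reduced instance of Fuzzy PEC($\sim$) is $(\hat G,k)$, and the map $(G,k)\mapsto(\hat G,k)$ is clearly computable in polynomial time.

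Next I would check equivalence of YES-instances. In the forward direction, if $F$ with $|F|\le k$ satisfies $G/F\in\Pi$, then contracting the same full-membership edges in $\hat G$ gives $(\hat G/F)^{(1)}\cong G/F\in\Pi$ by Theorem~\ref{thm:commute}, so $(\hat G,k)$ is a YES-instance. In the backward direction, any fuzzy contraction set $F$ with $|F|\le k$ consists necessarily of edges of $\hat G$, all of membership $1$, whence $(\hat G/F)^{(1)}\cong G/F$, and $(\hat G/F)^{(1)}\models\sim$ forces $G/F\in\Pi$; Theorem~\ref{thm:3cc} makes this exact by matching the fuzzy $3$-connected components certifying $\sim$ with the crisp $3$-connected components of $G/F$. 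Because every edge of $\hat G$ already meets the threshold, Theorem~\ref{thm:no-new} guarantees there are no sub-threshold edges whose contraction could silently alter the cut, so the correspondence is tight and no budget can be wasted or hidden.

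The step I expect to be the real obstacle is not the embedding itself, which is essentially an identity on combinatorial content, but two surrounding points. The first is the faithful transfer of the structural hypotheses: I must argue that ``determined by fuzzy $3$-connected components'' specialises at $\alpha=1$ to the crisp ``determined by $3$-connected components'' in the precise sense Asano--Hirata demand, which follows from Theorem~\ref{thm:3cc} together with the observation that the fuzzy $3$-connected components of $\hat G$ at membership $1$ are exactly the $3$-connected components of $G$. The second, and genuinely harder, point arises if one wants the strengthening announced in the text---that hardness persists on \emph{fuzzy $3$-connected} (or fuzzy bipartite) instances---since the all-ones embedding need not itself be $3$-connected; there I would replace the plain embedding by the explicit Asano--Hirata gadget reduction from Planar Connected Node Cover, realised through the full-membership constructions $L'$, $L(k)$, $L(k;k')$ of Section~\ref{sec:hardness} and invoking the Proposition there to guarantee that $L(k;k')$ is fuzzy $3$-connected while contracting the designated key edges is exactly what enforces $\sim$. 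Verifying that this gadget reduction preserves connectivity and the forbidden-subgraph bookkeeping under fuzzy contraction is the delicate part of the argument.
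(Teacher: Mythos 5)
Your reduction is correct, but it is not the route the paper takes for this theorem. The paper's proof of Theorem~\ref{thm1} is a direct fuzzy analogue of the Asano--Hirata gadget reduction from Planar Connected Node Cover: it builds $\tilde G_8=G_0(2)$, attaches full-membership copies of the minimal-counterexample gadgets $L(m)$ to the node sets $A=\{a(0),\dots,a(m-1)\}$, and argues via a fuzzy Steiner-tree lemma that a contraction set of size $k_9=2k_8+1$ exists iff $G_0$ has a connected node cover of size $k_0$. You instead reduce from the crisp problem PEC($\Pi$) itself, embedding a crisp instance as an all-ones fuzzy graph and invoking the commutation identity $(\hat G/F)^{(1)}\cong G/F$ of Theorem~\ref{thm:commute}; this is essentially the argument the paper reserves for Theorem~\ref{thm:fuzzy-hardness} in Section~6. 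Your approach is shorter and cleaner for the bare hardness claim, since it outsources all combinatorial work to the already-known crisp result and only needs the $\alpha$-cut machinery of Section~\ref{sec:contraction}; its cost is exactly the one you identify yourself, namely that the all-ones embedding inherits whatever structure the crisp instance has, so the strengthened claims for fuzzy $3$-connected and fuzzy bipartite instances (Theorems~\ref{thm2} and~\ref{thm3}) still require the explicit $L'$, $L(k)$, $L(k;k')$ gadget constructions. The paper's heavier PCNC reduction pays that price up front and therefore transfers to the restricted classes with only local modifications. One point you should make explicit if you write this up: the hypothesis of Theorem~\ref{thm1} is stated for a \emph{fuzzy} property $\sim$, so the claim that its crisp shadow $\Pi$ (the restriction to membership-one graphs) satisfies the Asano--Hirata nontriviality conditions needed for PEC($\Pi$) to be NP-hard is an assumption you are importing, not something that follows formally from ``hereditary under fuzzy contractions and determined by fuzzy $3$-connected components''; the paper's definitions of these fuzzy notions are informal enough that this specialisation deserves a sentence of justification rather than an appeal to Theorem~\ref{thm:3cc} alone.
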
 \begin{proof}
We prove NP-hardness of Fuzzy PEC($\sim$) by reduction from the classical PCNC (Planar Connected Node Cover) problem, which is known to be NP-hard.\medskip

 Construction of the fuzzy graph $\tilde{G}_9$.: Let $(G_0, k_0)$ be an instance of PCNC. Construct a fuzzy graph $\tilde{G}_9$ as follows:

\begin{enumerate}
    \item Construct $\tilde{G}_8 = G_0(2)$ by adding copies of each node of $G_0$ with fuzzy edges of membership 1 to preserve adjacency, following the Steiner tree argument.
    \item For each node set $A = \{a(0), a(1), \dots, a(m-1)\}$ corresponding to nodes in $G_0$, attach copies of fuzzy graphs $L(m)$ (constructed in Proposition 2.3) and identify nodes $v_i(j)$ in $L_i$ with $a(j)$, for $0 \le i \le k_8+1$ and $0 \le j \le m-1$.
    \item The resulting graph $\tilde{G}_9$ has fuzzy edges with membership 1 along all added structures.
\end{enumerate}

\noindent By construction, $\tilde{G}_9$ can be built in polynomial time in $|V(G_0)| + |E(G_0)|$.
Suppose $G_0$ has a connected node cover $N_0$ with $|N_0| \le k_0$. By Lemma (Steiner tree lemma for fuzzy graphs)\cite{pacifica2023steiner}, there exists a fuzzy Steiner tree $T$ in $\tilde{G}_8$ connecting all nodes in $A$ with $|E(T)| = m + |N_0| - 1 = k_8$.  \medskip

\noindent Define a subset of fuzzy edges $S \subseteq E(\tilde{G}_9)$:
\[
S = E(T) \cup E(a(0), u),
\]
where $u$ are corresponding nodes in each copy of $L_i$. Then $|S| \le k_9 = 2k_8 + 1$. 
 Contracting edges in $S$ merges all nodes in $A$ and key nodes in $L_i$ copies.  
 All 3-connected components in the resulting graph satisfy property $\sim$.  
 Hence, $\tilde{G}_9 / S \in \sim$, proving necessity.\medskip 

\noindent Conversely, suppose there exists a subset of fuzzy edges $S \subseteq E(\tilde{G}_9)$ with $|S| \le k_9$ such that $\tilde{G}_9 / S \in \sim$.  
 By Proposition 2.3 and the minimality argument, the contraction set $S$ must include edges that contract all nodes in $A$ into a single node $a(0)$.
 Define $S_8 = S \cap E(\tilde{G}_8)$. The induced subgraph $G_{S_8}$ forms a fuzzy Steiner tree connecting all nodes in $A$.
 Let $N_0$ be the set of original nodes of $G_0$ corresponding to nodes in $G_{S_8}$. Then $|N_0| \le k_0$, and $N_0$ is a connected node cover of $G_0$.
 Therefore, the existence of a solution $S$ to fuzzy PEC($\sim$) implies a solution to the original PCNC instance.\medskip

\noindent Since PCNC is NP-hard and the above reduction can be computed in polynomial time, it follows that Fuzzy PEC($\sim$) is NP-hard.  

\end{proof}

\begin{thm}\label{thm2}
Fuzzy PEC($\sim$) is NP-hard even if restricted to fuzzy 3-connected graphs.
\end{thm}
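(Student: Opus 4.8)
The plan is to reuse the reduction from Theorem~\ref{thm1} essentially verbatim, but to strengthen the gadget employed in the attachment step so that the entire produced instance is fuzzy $3$-connected, and then to argue that this strengthening leaves the correctness of the reduction intact. The essential new ingredient is Proposition 2.3(1), which guarantees that the doubly-parameterized gadget $L(k;k')$ is fuzzy $3$-connected, together with the observation made in its proof that identifying nodes and adding full-membership edges can only increase connectivity. Since PCNC remains NP-hard, it suffices to exhibit a polynomial reduction whose output lies in the restricted class.

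Concretely, I would first repeat the construction of $\tilde{G}_9$ from the proof of Theorem~\ref{thm1}, but replace every attached copy of $L(m)$ by a copy of $L(m;m')$ for a suitably chosen second parameter $m'$, all edges again carrying membership $1$. Because the base part $\tilde{G}_8 = G_0(2)$ is built from full-membership edges and each gadget $L(m;m')$ is fuzzy $3$-connected by Proposition 2.3(1), it remains only to treat the interfaces: the identification of the terminal nodes $v_i(j)$ with the nodes $a(j)$, and the junctions between consecutive copies $L_i$. Following the proof of Proposition 2.3(1), I would add full-membership edges across these interfaces so that no $2$-node set separates two gadgets, nor a gadget from the base. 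Because every added edge has membership $1$, the fuzzy $3$-connectivity of $\tilde{G}_9$ is decided by ordinary $3$-connectivity of its $\alpha=1$ cut $\tilde{G}_9^{(1)}$, which is what I would verify; Theorem~\ref{thm:commute} then ensures that contraction and the $\alpha=1$ cut commute, so all subsequent reasoning can be carried out in $\tilde{G}_9^{(1)}$.

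Next I would check that the reduction equivalence established for Theorem~\ref{thm1} survives the strengthening. The forward direction is unchanged: a connected node cover $N_0$ of $G_0$ still yields a fuzzy Steiner tree in $\tilde{G}_8$ and a contraction set $S$ with $|S| \le k_9$ whose contraction destroys the minimal forbidden configuration inside every gadget, so $\tilde{G}_9 / S \in \sim$; here Theorem~\ref{thm:3cc} reduces the verification of $\sim$ to checking the $3$-connected components of the crisp cut. For the converse I would reuse Proposition 2.3(2)--(4): any contraction set achieving $\sim$ must contract each $L(m;m')$ along its key edges, and, because $L' \cup \{(u,v)\} \notin \sim$, the extra connectivity edges cannot provide a cheaper ``shortcut'' that fakes a solution; projecting $S$ onto $\tilde{G}_8$ therefore still recovers a connected node cover of $G_0$ of size at most $k_0$.

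The main obstacle is precisely the interface analysis in the second step: I must certify that the full-membership edges added to enforce global $3$-connectivity neither create a new $2$-node cut (which would defeat the very property being claimed) nor open a contraction path that satisfies $\sim$ with strictly fewer than $k_9$ contractions (which would render the reduction unsound). Balancing these two opposing requirements---enough edges to eliminate every small separator, but not so many, nor so placed, that they disturb the minimal-counterexample bookkeeping of Proposition 2.3---is the delicate point, and it is exactly where the second parameter $m'$ in $L(m;m')$ must be tuned so that the added edges remain confined to each gadget's $3$-connected core.
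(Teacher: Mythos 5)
Your proposal follows essentially the same route as the paper's proof: reuse the Theorem~\ref{thm1} reduction, make the instance fuzzy $3$-connected by attaching the fuzzy $3$-connected gadgets of Proposition 2.3(1) and adding full-membership edges so that no $2$-node cut separates the pieces, and then argue that the necessity/sufficiency equivalence with the PCNC instance is undisturbed. If anything, you are more candid than the paper about the one genuinely delicate step --- verifying that the extra connectivity edges neither create new small separators nor admit a cheaper contraction set --- which the paper simply asserts rather than proves.
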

\begin{proof}
We prove that Fuzzy PEC($\sim$) remains NP-hard when restricted to fuzzy 3-connected graphs by modifying the previous NP-hardness reduction.\medskip

\noindent Let $(G_0, k_0)$ be an instance of PCNC. Using the construction in Theorem 3.1, we build a fuzzy graph $\tilde{G}_9$ by attaching copies of fuzzy graphs $L(m)$ to $\tilde{G}_8$, with all added edges having membership 1.  
In the general construction, $\tilde{G}_9$ may not be fully 3-connected due to the identification of nodes.  \medskip 

 \noindent For each copy of $L(m)$, recall that $L(m)$ is fuzzy 3-connected by Proposition 2.3(1).  
In the construction of $\tilde{G}_9$, identify nodes and add fuzzy edges connecting the copies of $L(m)$ such that any 2-node cut has membership below the threshold $\alpha$.  
 Since the copies of $L(m)$ are fully connected internally and connected to other copies via edges of membership 1, any potential 2-node cut does not disconnect the graph. 
Thus, the resulting $\tilde{G}_9$ is fuzzy 3-connected.\medskip

\noindent The argument for necessity and sufficiency of edge contraction from theorem still holds:  
     Necessity: Any connected node cover $N_0$ in $G_0$ corresponds to a set $S$ of fuzzy edge contractions in $\tilde{G}_9$, yielding $\tilde{G}_9 / S \in \sim$.  
     Sufficiency: Any set $S$ of fuzzy edge contractions of size $\le k_9$ in $\tilde{G}_9$ yields a connected node cover $N_0$ in $G_0$.  \medskip

\noindent Therefore, solving Fuzzy PEC($\sim$) on this fuzzy 3-connected $\tilde{G}_9$ is equivalent to solving the original PCNC instance. Since PCNC is NP-hard and $\tilde{G}_9$ is now fuzzy 3-connected, Fuzzy PEC($\sim$) is NP-hard even when restricted to fuzzy 3-connected graphs.
\end{proof}

\begin{thm}\label{thm3}
Fuzzy PEC($\sim$) is NP-hard even if restricted to fuzzy bipartite graphs.
\end{thm}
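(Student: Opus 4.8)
The plan is to reuse the template of Theorems~\ref{thm1} and~\ref{thm2} verbatim for the reduction from PCNC, and to concentrate all the new work on redesigning the gadgets so that the crisp $1$-cut $\tilde{G}_9^{(1)}$ is bipartite. The obstruction is visible at once: the minimal counterexample $L$ and its expansion $L'$ in Proposition 2.3 are built from fuzzy triangles of full membership, yet a bipartite graph contains no odd cycle and hence no triangle. The first step is therefore to replace each full-membership edge by a length-two full-membership path: for every edge $uv$ I would insert a fresh node $t_{uv}$ of membership $1$ together with edges $u t_{uv}$ and $t_{uv} v$ of membership $1$. Subdividing \emph{every} edge of $\tilde{G}_9$ in this way doubles the length of every cycle in the $1$-cut, so all cycles become even and $\tilde{G}_9^{(1)}$ is bipartite (two-colour the original nodes one colour and the subdivision nodes the other); since the added memberships all lie in $\{1\}$, the fuzzy graph is fuzzy bipartite under both threshold and all-$\alpha$ semantics. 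Call the subdivided gadget $\widehat{L}(m)$.

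Next I would check that the structural invariants driving Theorem~\ref{thm1} survive subdivision for the motivating properties. Planarity and series–parallelness (the examples the abstract singles out) are \emph{subdivision-invariant in both directions}: a graph is planar, respectively $K_4$-minor-free, if and only if any subdivision of it is, because a subdivision contains the original as a minor and vice versa. Hence $\widehat{L}(m)\notin\sim$ whenever $L(m)\notin\sim$, and fuzzy $3$-connectivity of the gadget is preserved by Proposition 2.3(1) since subdivision cannot create a $2$-node cut of full membership that was not already forced. For the contraction correspondence I would invoke Theorem~\ref{thm:commute} together with the elementary fact that contracting \emph{one} of the two edges of a subdivided pair $u t_{uv} v$ merges $t_{uv}$ into an endpoint and restores the original incidence $uv$; thus a full-membership contraction set $S$ with $\tilde{G}_9/S\in\sim$ still forces the nodes of each set $A$ to be merged, and projecting $S_8=S\cap E(\tilde{G}_8)$ back through the subdivision recovers a connected node cover $N_0$ of $G_0$ with $|N_0|\le k_0$, exactly as in Theorem~\ref{thm1}.

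The main obstacle splits into two parts. First, note that edge contraction does not respect bipartiteness---contracting a full-membership edge across the two colour classes can manufacture an odd cycle---so $\tilde{G}_9/S$ need not be bipartite; this is harmless because Theorem~\ref{thm3} restricts only the \emph{instance} to be fuzzy bipartite while testing $\sim$ on the contracted graph, which is free to leave the bipartite class. The genuinely delicate point is budget accounting: each original contraction now corresponds to collapsing a length-two path, so the threshold must be rescaled from $k_9$ to some $k_9'$, and I must show (i) that $k_9'$ remains polynomial in $|V(G_0)|+|E(G_0)|$, and (ii) that no \emph{parasitic} contraction set cheaper than the intended path contractions can destroy the forbidden $3$-connected component, using Theorem~\ref{thm:mono} and Theorem~\ref{thm:bridging} to rule out adjacencies created by alternative merges. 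I expect this rescaling-and-minimality step to be the crux, and I would flag the one caveat that keeps the argument honest for \emph{general} $\sim$: subdivision preserves $\sim$ automatically only for subdivision-invariant properties, so a fully general statement requires instead exhibiting a bipartite, fuzzy $3$-connected, minimally $\sim$-violating gadget to play the role of $\widehat{L}(m)$, whose existence is the real obstruction beyond the planar and series–parallel cases.
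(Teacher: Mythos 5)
Your high-level strategy coincides with the paper's: keep the PCNC reduction of Theorems~\ref{thm1} and~\ref{thm2} intact and swap the gadgets $L(m)$ for bipartite surrogates. Where you differ is that the paper simply \emph{asserts} the existence of a fuzzy bipartite gadget $L_B(m)$ ``which preserves the minimal forbidden structure,'' whereas you propose an explicit construction by subdividing every full-membership edge. That concreteness is a genuine improvement, and your closing caveat --- that subdivision only preserves $\sim$-violation for subdivision-invariant properties, so a fully general theorem needs a bespoke bipartite minimal counterexample --- identifies precisely the gap that the paper's one-line appeal to $L_B(m)$ papers over. Neither you nor the paper actually discharges that obligation for arbitrary $\sim$.

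There is, however, one concrete error in your argument: subdivision does \emph{not} preserve $3$-connectivity. Each new node $t_{uv}$ has degree $2$, so $\{u,v\}$ is a $2$-node cut of full membership separating $t_{uv}$ from the rest; your claim that ``subdivision cannot create a $2$-node cut of full membership that was not already forced'' is false, and Proposition 2.3(1) cannot be invoked for $\widehat{L}(m)$. The argument is salvageable --- in the Tutte decomposition of a subdivision of a $3$-connected graph $L(m)$, the torso $L(m)$ reappears as a $3$-connected component alongside trivial polygon pieces, so a property determined by $3$-connected components still rejects $\widehat{L}(m)$ whenever it rejects $L(m)$ --- but that is a different justification from the one you give, and it changes the minimality analysis: contracting a \emph{single} edge of a subdivided pair merely un-subdivides it and leaves the forbidden $3$-connected component intact, so the ``contracting any edge yields a graph in $\sim$'' property of Proposition 2.3(3) fails for $\widehat{L}(m)$ and the budget threshold $k_9'$ and the parasitic-contraction argument must be reworked from scratch. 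You correctly flag this rescaling-and-minimality step as the crux, but it is left unresolved, so the proposal as written is an outline with the hardest step still open rather than a complete proof.
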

\begin{proof}
We prove that Fuzzy PEC($\sim$) remains NP-hard when restricted to fuzzy bipartite graphs by modifying the general NP-hardness construction.\medskip 

\noindent Let $(G_0, k_0)$ be an instance of PCNC. Using the construction in Theorem~\ref{thm:commute}, we build a fuzzy graph $\tilde{G}_9$ by attaching copies of fuzzy graphs $L(m)$ to $\tilde{G}_8$, with all edges having membership 1.\medskip  

\noindent Each copy of $L(m)$ (a fuzzy 3-connected graph) can be replaced by a fuzzy bipartite graph $L_B(m)$, which preserves the minimal forbidden structure for property $\sim$ under edge contraction.  
 The construction of $\tilde{G}_9$ is modified such that:
    \begin{itemize}
        \item All nodes can be partitioned into two sets $U$ and $V$, with fuzzy edges only between $U$ and $V$.  
        \item Edges connecting copies of $L_B(m)$ to $\tilde{G}_8$ respect this partition.  
    \end{itemize}
\noindent Thus, $\tilde{G}_9$ is now a fuzzy bipartite graph.\medskip 

 \noindent Necessity: Any connected node cover $N_0$ in $G_0$ corresponds to a set $S$ of fuzzy edge contractions in $\tilde{G}_9$, yielding $\tilde{G}_9 / S \in \sim$.  
Sufficiency: Any set $S$ of fuzzy edge contractions of size $\le k_9$ in $\tilde{G}_9$ corresponds to a connected node cover $N_0$ in $G_0$.  
 The contractions and the structural arguments from Theorem~\ref{thm:commute} carry over because the key edges in $L_B(m)$ are preserved, and the property $\sim$ is hereditary under fuzzy contractions.\medskip 

\noindent Since PCNC is NP-hard and the reduction produces a fuzzy bipartite graph, Fuzzy PEC($\sim$) is NP-hard even when restricted to fuzzy bipartite graphs.
\end{proof}

\begin{cor}
The fuzzy edge-contraction problem remains computationally intractable for standard subclasses of fuzzy graphs, including thresholded 3-connected or bipartite fuzzy graphs.
\end{cor}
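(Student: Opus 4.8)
The plan is to derive this statement directly from the restricted hardness results already in hand, since the corollary merely collects the NP-hardness of Fuzzy PEC($\sim$) over two concrete subclasses and asks for nothing beyond what Theorems~\ref{thm2} and~\ref{thm3} supply. First I would recall that Theorem~\ref{thm2} constructs, in polynomial time, a fuzzy $3$-connected instance $\tilde{G}_9$ from an arbitrary PCNC instance $(G_0,k_0)$ with the property that $\tilde{G}_9/S\in\sim$ for some edge set with $|S|\le k_9$ if and only if $G_0$ admits a connected node cover of size at most $k_0$. Because every added edge carries membership $1$, the threshold semantics evaluated at $\alpha=1$ make the cut $\tilde{G}_9^{(1)}$ a crisp $3$-connected graph, so the labels \emph{thresholded $3$-connected} and \emph{fuzzy $3$-connected} coincide on exactly these instances; invoking Theorem~\ref{thm2} therefore yields intractability over the thresholded $3$-connected subclass.

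For the bipartite case I would argue in parallel from Theorem~\ref{thm3}: its reduction replaces each minimal-counterexample copy $L(m)$ by a fuzzy bipartite block $L_B(m)$ and routes every connecting full-membership edge across a fixed bipartition $(U,V)$, so the produced instance already lies in the fuzzy bipartite subclass while preserving the exact correspondence with connected node covers of $G_0$. Combining the two reductions, and using the elementary fact that restricting a decision problem to a subclass which still contains the hard instances cannot lower its complexity, I would conclude that Fuzzy PEC($\sim$) is NP-hard on each of the two subclasses named in the statement, establishing the ``$3$-connected \emph{or} bipartite'' disjunction of the corollary.

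The one point requiring genuine care, and the step I expect to be the main obstacle, is confirming that the restriction truly preserves hardness, namely that the constructed instances actually \emph{belong} to the claimed subclass rather than merely approximating it. This is precisely the content of the structural checks in the preceding proposition (the fuzzy $3$-connectivity of $L(k;k')$, surviving both node identification and full-membership attachment) together with the partition-respecting attachment in Theorem~\ref{thm3}. Once these membership and bipartition invariants are verified to persist through the identifications, the reductions transfer verbatim and the corollary follows with no additional combinatorial work; accordingly I would keep the proof short, citing Theorems~\ref{thm2} and~\ref{thm3} and remarking only on the $\alpha=1$ threshold identification that links the fuzzy and crisp notions of the two subclasses.
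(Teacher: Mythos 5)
Your proposal is correct and takes essentially the same approach as the paper: it simply collects the restricted hardness results for the two subclasses and notes that thresholding is harmless because every edge in the constructions carries membership $1$, so the crisp and fuzzy notions of the subclasses coincide on the hard instances. (You cite Theorems~\ref{thm2} and~\ref{thm3}, which is what the paper's argument actually relies on, even though its own proof text mislabels them as Theorems~\ref{thm1} and~\ref{thm2}.)
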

\begin{proof}
The corollary follows directly from Theorems~\ref{thm1} and \ref{thm2}:

\begin{itemize}
    \item Theorem~\ref{thm1} shows that Fuzzy PEC($\sim$) is NP-hard even when restricted to fuzzy 3-connected graphs.
    \item Theorem~\ref{thm2} shows that Fuzzy PEC($\sim$) is NP-hard even when restricted to fuzzy bipartite graphs.
\end{itemize}

\noindent Since NP-hardness implies computational intractability, Fuzzy PEC($\sim$) remains intractable for these standard subclasses of fuzzy graphs.   Thresholding the fuzzy edges (e.g., considering only edges with membership above a fixed $\alpha$) does not change the correctness of the reductions, because all edges used in the construction have full membership (1), and thus remain present under any reasonable threshold. Hence, Fuzzy PEC($\sim$) is computationally intractable for thresholded 3-connected or bipartite fuzzy graphs.
\end{proof}

\section{Generalization to Fuzzy Graphs}

\noindent We now extend the hardness results of Asano and Hirata~\cite{AsanoHirata1982} from crisp graphs to fuzzy graphs. 

\begin{dfn}
A \emph{fuzzy graph} is a pair $G_f=(\mu_V,\mu_E)$ with node membership function 
$\mu_V:V\to[0,1]$ and edge membership function $\mu_E:E\to[0,1]$. 
For $\alpha\in(0,1]$, the \emph{$\alpha$-cut} of $G_f$ is the crisp graph
\[
  G_f^{(\alpha)} = \big(\{v\in V : \mu_V(v)\ge \alpha\},\;
  \{uv\in E : \mu_E(uv)\ge \alpha \ \wedge\ u,v\in V^{(\alpha)}\}\big).
\]
\end{dfn}

\begin{dfn}
Let $\Pi$ be a graph property.
\begin{itemize}
    \item \textbf{FPED$_{\alpha_0}(\Pi)$ (threshold semantics).} 
    Given a fuzzy graph $G_f$, integer $k$, and a fixed threshold $\alpha_0\in(0,1]$, 
    does there exist a set $S\subseteq E$ with $|S|\le k$ such that, 
    after setting $\mu'_E(e)=0$ for all $e\in S$ (and leaving other memberships unchanged), 
    the $\alpha_0$-cut $G_f'^{(\alpha_0)}$ lies in $\Pi$?

    \item \textbf{FPED$_\forall(\Pi)$ (all-$\alpha$ semantics).}
    Same as above, but we require $G_f'^{(\alpha)}\in\Pi$ for all $\alpha\in(0,1]$.
\end{itemize}
Analogous definitions apply to fuzzy edge-contraction, yielding FPEC$_{\alpha_0}(\Pi)$ and FPEC$_\forall(\Pi)$.
\end{dfn}

\begin{thm}
\label{thm:fuzzy-hardness}
Let $\Pi$ be any graph property satisfying the conditions of Asano and Hirata 
(hereditary on subgraphs, or hereditary on contractions, and determined by $3$-connected components).
Then both FPED$_{\alpha_0}(\Pi)$ and FPEC$_{\alpha_0}(\Pi)$ are NP-hard for every fixed threshold $\alpha_0\in(0,1]$.
\end{thm}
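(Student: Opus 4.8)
The plan is to lift the classical Asano--Hirata hardness results to the fuzzy setting by a membership-saturation reduction. Under the stated hypotheses on $\Pi$ (hereditary under the relevant operation and determined by $3$-connected components), the crisp problems PED($\Pi$) and PEC($\Pi$) are NP-hard, and these will be my source problems. Given a crisp instance $(G,k)$ of PED($\Pi$) (resp. PEC($\Pi$)), I would build a fuzzy graph $G_f=(\mu_V,\mu_E)$ on the same vertex and edge set by setting $\mu_V(v)=1$ for every $v\in V(G)$ and $\mu_E(e)=1$ for every $e\in E(G)$. This respects the admissibility constraint $\mu_E(u,v)\le\min\{\mu_V(u),\mu_V(v)\}$ and is computable in polynomial time. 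The governing observation is that, since $1\ge\alpha_0$ for every $\alpha_0\in(0,1]$, the cut recovers the original graph exactly: $G_f^{(\alpha_0)}=G$. Thus one construction serves uniformly for every fixed threshold.

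For FPED$_{\alpha_0}(\Pi)$ the correspondence is immediate. Deleting a set $S$ of fuzzy edges sets $\mu'_E(e)=0<\alpha_0$ for $e\in S$ while leaving the remaining edges at membership $1\ge\alpha_0$ and all node memberships untouched, so $(G_f-S)^{(\alpha_0)}=G-S$. This gives a size-preserving bijection between fuzzy and crisp deletion sets, whence $(G_f,k)$ is a YES-instance of FPED$_{\alpha_0}(\Pi)$ if and only if $(G,k)$ is a YES-instance of PED($\Pi$), and the hardness transfers.

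For FPEC$_{\alpha_0}(\Pi)$ I would appeal to the commutation of contraction with the $\alpha_0$-cut. Every contracted edge has membership $1\ge\alpha_0$ and both endpoints have node membership $1\ge\alpha_0$; moreover each contraction keeps the new node at membership $\min\{1,1\}=1$ and every surviving merged edge at $T(1,1)=1$. Hence Theorem~\ref{thm:commute}, extended to an edge set $F$ exactly as in Theorem~\ref{thm:hereditary} (with order-independence supplied by Theorem~\ref{thm:order}), yields $(G_f/F)^{(\alpha_0)}\cong G_f^{(\alpha_0)}/F=G/F$. Consequently $(G_f,k)$ is a YES-instance of FPEC$_{\alpha_0}(\Pi)$ if and only if $(G,k)$ is a YES-instance of PEC($\Pi$). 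Because $\Pi$ is evaluated on the crisp cut, which is literally a classical graph, its hereditary character and its determination by $3$-connected components carry over verbatim, so the Asano--Hirata hypotheses hold on every reduced instance.

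The step I expect to be the main obstacle is the FPEC correspondence: one must confirm that the $t$-norm adjacency-merge rule, once restricted to the saturated $\alpha_0$-cut, reproduces the classical edge contraction underlying the Asano--Hirata result rather than a strictly weaker fuzzy analogue. This reduces entirely to the commutation identity of Theorem~\ref{thm:commute} together with the closure facts $T(1,1)=1$ and $\mu_V(w)=1$; the remaining care is purely bookkeeping—verifying that these keep the contracted configuration above threshold and thereby preserve the $3$-connected-component decomposition on which $\Pi$ is tested. The FPED direction, by contrast, is a membership relabeling requiring no structural argument, and the uniformity over all $\alpha_0\in(0,1]$ is automatic because every membership used equals $1$.
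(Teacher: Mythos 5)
Your proposal is correct and follows essentially the same route as the paper: both assign membership $1$ to every vertex and edge so that the $\alpha_0$-cut recovers the crisp instance exactly, and then transfer the Asano--Hirata hardness of PED($\Pi$)/PEC($\Pi$) through this identity. Your explicit appeal to Theorem~\ref{thm:commute} and the closure facts $T(1,1)=1$, $\mu_V(w)=1$ for the contraction case makes the correspondence slightly more rigorous than the paper's sketch, but it is the same reduction.
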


\begin{proof}

We prove NP-hardness of FPED$_{\alpha_0}(\Pi)$ and FPEC$_{\alpha_0}(\Pi)$ by reduction from the classical NP-hard PEC($\Pi$) problem. Let $(G_0,k_0)$ be an instance of the classical PEC($\Pi$) or PCNC problem. Define a fuzzy graph $\tilde{G}_0=(V, \tilde{E})$ by assigning membership:
\[
\tilde{E}(u,v) = 1 \quad \text{for every edge $(u,v)\in E(G_0)$}.
\]
All other pairs have $\tilde{E}(u,v)=0$. Choose a fixed threshold $\alpha_0 \in (0,1]$.
 Under thresholding at $\alpha_0$, the thresholded fuzzy graph $\tilde{G}_0[\alpha_0]$ coincides exactly with the original classical graph $G_0$. \medskip

\noindent  Suppose $G_0$ has a solution (edge deletion set or edge contraction set) of size $\le k_0$ that produces a graph satisfying $\Pi$.  The same set of fuzzy edges, when considered in $\tilde{G}_0$, will have membership $\ge \alpha_0$ and thus corresponds to a valid FPED$_{\alpha_0}(\Pi)$ or FPEC$_{\alpha_0}(\Pi)$ solution.  
 Thresholding does not remove any critical edges since all edges have membership 1.  
 Therefore, any solution to the classical problem corresponds to a solution in the fuzzy problem.\medskip  

\noindent Conversely, suppose there exists a subset $S$ of fuzzy edges of size $\le k_0$ whose deletion or contraction produces $\tilde{G}_0/S$ (or $\tilde{G}_0 \setminus S$) satisfying $\Pi$ under threshold $\alpha_0$.   All edges in $S$ have membership $\ge \alpha_0$ (since only edges with membership $\ge \alpha_0$ are considered present).   The thresholded graph $\tilde{G}_0[\alpha_0]$ is identical to $G_0$, so $S$ also provides a solution to the classical PEC($\Pi$) problem of size $\le k_0$.  \medskip
 
\noindent  The reduction from classical PEC($\Pi$) to FPED$_{\alpha_0}(\Pi)$ or FPEC$_{\alpha_0}(\Pi)$ is polynomial-time: it only assigns membership 1 to all original edges.  Since PEC($\Pi$) is NP-hard for any property $\Pi$ satisfying the Asano-Hirata conditions (hereditary on subgraphs or contractions and determined by 3-connected components), it follows that FPED$_{\alpha_0}(\Pi)$ and FPEC$_{\alpha_0}(\Pi)$ are NP-hard for any fixed $\alpha_0 \in (0,1]$. \medskip

\noindent Thus, both the fuzzy edge-deletion and fuzzy edge-contraction problems remain NP-hard for any property $\Pi$ satisfying the Asano-Hirata conditions and any threshold $\alpha_0 \in (0,1]$.
\end{proof}

\begin{cor}
FPED$_\forall(\Pi)$ and FPEC$_\forall(\Pi)$ are NP-hard. 
In particular, NP-hardness already holds at $\alpha=1$, so the stronger ``for all $\alpha$'' requirement does not reduce complexity.
\end{cor}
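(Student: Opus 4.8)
The plan is to reuse verbatim the polynomial-time reduction already constructed in the proof of Theorem~\ref{thm:fuzzy-hardness}, and then to observe that the special structure of the instance it produces makes the all-$\alpha$ requirement collapse to a single threshold. Concretely, given a classical instance $(G_0,k_0)$ of PEC($\Pi$) (or PCNC), I would form the fuzzy graph $\tilde{G}_0$ with $\tilde{E}(u,v)=1$ for every edge of $G_0$ and $\tilde{E}(u,v)=0$ otherwise, taking all node memberships equal to $1$ as well. The first step is to record the crucial invariant: every membership value in $\tilde{G}_0$ lies in $\{0,1\}$.

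Second, I would verify that this binary structure is preserved by both operations. For deletion this is immediate, since setting memberships to $0$ keeps all values in $\{0,1\}$. For contraction I invoke Lemma~\ref{lem:Tfacts}: a new incidence receives membership $T(\mu_E(ux),\mu_E(vx))$, and because $T(a,b)\le\min\{a,b\}$ we have $T(0,\cdot)=0$, while the $t$-norm identity gives $T(1,1)=1$; hence every merged edge is again either $0$ or $1$, and likewise the merged node membership $\min\{\mu_V(u),\mu_V(v)\}$ stays at $1$. Thus, after applying any edge set $F$ of deletions or contractions, the resulting fuzzy graph $\tilde{G}_0'$ still has all memberships in $\{0,1\}$.

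The third and decisive step is to note that for a fuzzy graph whose memberships are all $0$ or $1$, the $\alpha$-cut $\tilde{G}_0'^{(\alpha)}$ is literally the same crisp graph for every $\alpha\in(0,1]$: an edge survives the cut iff its membership equals $1$, independently of $\alpha$. Consequently the all-$\alpha$ condition
\[
\tilde{G}_0'^{(\alpha)}\in\Pi\quad\text{for all }\alpha\in(0,1]
\]
is logically equivalent to the single-threshold condition at $\alpha=1$, namely $\tilde{G}_0'^{(1)}\in\Pi$, which in turn coincides with the classical condition $G_0-F\in\Pi$ (resp. $G_0/F\in\Pi$). Therefore FPED$_\forall(\Pi)$ (resp. FPEC$_\forall(\Pi)$) restricted to such instances is exactly the threshold problem FPED$_{1}(\Pi)$ (resp. FPEC$_{1}(\Pi)$), and the correctness of the reduction follows word-for-word from Theorem~\ref{thm:fuzzy-hardness} specialised to $\alpha_0=1$.

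Finally, I would combine the pieces: the reduction is polynomial and, by the equivalence just established, maps YES-instances to YES-instances and NO-instances to NO-instances of the all-$\alpha$ fuzzy problem; since classical PEC($\Pi$) is NP-hard under the Asano--Hirata conditions, so are FPED$_\forall(\Pi)$ and FPEC$_\forall(\Pi)$, with hardness already realised at $\alpha=1$. I do not anticipate a genuine obstacle, since the heavy lifting is inherited from Theorem~\ref{thm:fuzzy-hardness}; the only point requiring care is the contraction case of the binary invariant, where one must ensure the chosen $t$-norm satisfies $T(1,1)=1$ and $T(0,\cdot)=0$ --- both guaranteed by Lemma~\ref{lem:Tfacts} together with the defining identity of a $t$-norm.
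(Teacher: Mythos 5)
Your proposal is correct, and it follows the same overall strategy as the paper: both arguments rest on the observation that the all-$\alpha$ requirement is already settled by the membership-$1$ reduction of Theorem~\ref{thm:fuzzy-hardness}. The difference lies in how the collapse to a single threshold is justified, and here your route is actually the more solid one. The paper argues generically that ``any solution valid at $\alpha=1$ automatically satisfies the requirement for all $\alpha\le 1$, because lowering the threshold can only add edges and vertices, and $\Pi$ is hereditary under edge deletions or contractions.'' Taken at face value for arbitrary fuzzy graphs, this reasoning points the wrong way: lowering $\alpha$ makes $G_f'^{(\alpha)}$ a \emph{supergraph} of $G_f'^{(1)}$, and heredity only transfers $\Pi$ \emph{downward} to subgraphs and contractions, not upward to supergraphs, so the paper's sentence does not by itself establish the implication. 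You sidestep this entirely by restricting attention to the instances the reduction actually produces: all memberships lie in $\{0,1\}$, this binary invariant is preserved under deletion trivially and under contraction because $T(1,1)=1$ and $T(0,\cdot)=0$ (Lemma~\ref{lem:Tfacts} plus the $t$-norm identity), and hence every $\alpha$-cut with $\alpha\in(0,1]$ is literally the same crisp graph, making the all-$\alpha$ condition identical to the $\alpha=1$ condition. This localised argument delivers exactly what the corollary needs (NP-hardness of the all-$\alpha$ problems via hardness at $\alpha=1$) without relying on the dubious general monotonicity claim, so your version is the one I would keep.
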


\begin{cor}
If the objective is to minimize the total membership removed,
\[
  \sum_{e\in E} \big(\mu_E(e)-\mu'_E(e)\big),
\]
subject to $G_f'^{(\alpha_0)}\in\Pi$, the problem remains NP-hard.
\end{cor}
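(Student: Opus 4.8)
The plan is to reduce from the cardinality version FPED$_{\alpha_0}(\Pi)$, whose NP-hardness is established in Theorem~\ref{thm:fuzzy-hardness}, and to show that on the instances produced by that reduction the weighted objective is a monotone function of the number of edges pushed below the threshold. Concretely, I would reuse the fuzzy graph $\tilde{G}_0$ in which every edge of the classical instance $G_0$ receives membership $1$ and all other pairs receive membership $0$, so that $\tilde{G}_0^{(\alpha_0)}=G_0$. Let $\kappa=\min\{|F|:G_0-F\in\Pi\}$ be the optimum of the classical cardinality problem; the decision question is whether $\kappa\le k_0$.

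The key structural observation is that any modified membership function $\mu'_E$ induces the $\alpha_0$-cut $G_0-F$, where $F=\{e:\mu'_E(e)<\alpha_0\}$ is exactly the set of edges dropped below threshold. Feasibility $\big((\tilde{G}_0')^{(\alpha_0)}\in\Pi\big)$ is therefore equivalent to $G_0-F\in\Pi$, which forces $|F|\ge\kappa$. Since every edge starts at membership $1$, the cost satisfies
\[
\sum_{e\in E}\big(\mu_E(e)-\mu'_E(e)\big)\;\ge\;\sum_{e\in F}\big(1-\mu'_E(e)\big)\;>\;(1-\alpha_0)\,|F|\;\ge\;(1-\alpha_0)\,\kappa,
\]
with the cost approaching $(1-\alpha_0)|F|$ as each $\mu'_E(e)$ is taken just below $\alpha_0$ on $F$ and left at $1$ elsewhere. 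Thus the infimal cost equals $(1-\alpha_0)\kappa$.

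To turn this into a decision reduction I would set the budget to $B=(1-\alpha_0)\big(k_0+\tfrac{1}{2}\big)$ and ask whether a feasible $\mu'_E$ of cost $\le B$ exists. If $\kappa\le k_0$, then choosing $F$ with $|F|=\kappa$ and $\mu'_E$ just below $\alpha_0$ on $F$ yields cost arbitrarily close to $(1-\alpha_0)\kappa\le(1-\alpha_0)k_0<B$, hence feasible; if $\kappa\ge k_0+1$, then every feasible solution has $|F|\ge\kappa\ge k_0+1$ and therefore cost $>(1-\alpha_0)(k_0+1)>B$, hence infeasible. So the weighted decision problem answers exactly whether $\kappa\le k_0$, and since the construction is polynomial and $\alpha_0$ is fixed, NP-hardness transfers. (If one instead adopts the stricter deletion semantics $\mu'_E(e)\in\{0,\mu_E(e)\}$, the per-edge cost is exactly $1$, the infimum is attained at $\kappa$, and the plain budget $k_0$ already suffices.)

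The main obstacle I anticipate is precisely the continuous nature of the objective: because partial membership reductions are permitted and an edge leaves the cut only under the strict inequality $\mu'_E(e)<\alpha_0$, the optimum is an infimum that need not be attained, so one cannot naively equate ``budget $k_0$'' with ``delete $k_0$ edges.'' The gap budget $B=(1-\alpha_0)\big(k_0+\tfrac{1}{2}\big)$ is what cleanly separates the YES and NO cases and circumvents this non-attainment; the step that genuinely needs the monotonicity inequality above is ruling out that some clever fractional combination of reductions could undercut the value $(1-\alpha_0)\kappa$.
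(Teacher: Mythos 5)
Your argument is correct in substance but takes a genuinely different route from the paper: the proof printed under this corollary never actually addresses the weighted objective (it argues the preceding corollary, that hardness at $\alpha=1$ transfers to the all-$\alpha$ semantics), so the paper's intended justification is evidently the one-line observation that in the reduction of Theorem~\ref{thm:fuzzy-hardness} every edge has membership $1$ and deletion sets $\mu'_E(e)=0$, whence the total membership removed equals $|S|$ and the weighted problem coincides verbatim with the cardinality problem. That is exactly your closing parenthetical, and under the paper's own definition of FPED$_{\alpha_0}$ (which forces $\mu'_E(e)\in\{0,\mu_E(e)\}$) it is all that is needed. The bulk of your proposal instead treats the harder continuous relaxation in which arbitrary partial reductions of membership are allowed; there your lower bound $\sum_{e\in F}\big(1-\mu'_E(e)\big)>(1-\alpha_0)\,|F|$ and the gap budget $B=(1-\alpha_0)\big(k_0+\tfrac{1}{2}\big)$ correctly separate $\kappa\le k_0$ from $\kappa\ge k_0+1$ and cleanly handle the non-attainment of the infimum, an issue the paper never confronts. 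The one caveat is that this continuous reduction degenerates at $\alpha_0=1$: then $B=0$, every nonempty $F$ has strictly positive but arbitrarily small cost, and the budget test detects only whether $G_0\in\Pi$ rather than whether $\kappa\le k_0$; so at $\alpha_0=1$ you must fall back on the discrete deletion semantics of your parenthetical (or restrict the continuous claim to $\alpha_0<1$). With that proviso stated explicitly, your argument is both correct and strictly more informative than what the paper provides.
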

\begin{proof}
From Theorem \ref{thm:fuzzy-hardness}, FPED$_{\alpha_0}(\Pi)$ and FPEC$_{\alpha_0}(\Pi)$ are NP-hard for every fixed $\alpha_0 \in (0,1]$.  In particular, setting $\alpha_0 = 1$, we obtain that FPED$_{1}(\Pi)$ and FPEC$_{1}(\Pi)$ are NP-hard.  \medskip

\noindent By definition, FPED$_\forall(\Pi)$ and FPEC$_\forall(\Pi)$ require a solution that works for \emph{all} $\alpha \in (0,1]$. Any solution valid at $\alpha = 1$ automatically satisfies the requirement for \emph{all} $\alpha \le 1$, because lowering the threshold can only add edges and vertices, and $\Pi$ is hereditary under edge deletions or contractions. 
Hence, the all-$\alpha$ version of the problem is at least as hard as the $\alpha=1$ case. Therefore, FPED$_\forall(\Pi)$ and FPEC$_\forall(\Pi)$ are NP-hard.
\end{proof}

\begin{cor}
For properties such as \emph{planarity} or \emph{series--parallel}, 
which are hereditary and determined by $3$-connected components, 
the corresponding fuzzy edge-deletion and edge-contraction problems are NP-hard 
under both threshold and all-$\alpha$ semantics.
\end{cor}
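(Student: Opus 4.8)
The plan is to recognize that this corollary is simply the specialization of Theorem~\ref{thm:fuzzy-hardness} (together with the preceding all-$\alpha$ corollary) to two concrete properties, so the entire task reduces to checking that \emph{planarity} and \emph{series--parallelness} each satisfy the Asano--Hirata hypotheses: hereditary under the relevant operation and determined by $3$-connected components. Once these structural facts are in place, no new reduction is needed; I would invoke the general theorem verbatim.

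First I would dispatch planarity. Planarity is a minor-closed property, hence it is hereditary under edge deletions (giving the FPED hypothesis) and under edge contractions (giving the FPEC hypothesis). For the ``determined by $3$-connected components'' condition I would invoke the Wagner--Kuratowski characterization that a graph is planar iff it contains neither $K_5$ nor $K_{3,3}$ as a minor, together with the classical fact that the decomposition of a graph into its $3$-connected (triconnected) components preserves and reflects these forbidden minors; equivalently, a graph is planar iff each of its $3$-connected components is planar. Second, I would treat series--parallelness in the same way: series--parallel graphs are exactly the $K_4$-minor-free graphs, so the class is again minor-closed and therefore hereditary under both deletions and contractions, and the component condition holds because a graph is series--parallel iff none of its $3$-connected components is nontrivial (each triconnected component is a bond, a cycle, or a single edge). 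Thus series--parallelness is likewise determined by $3$-connected components.

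Having verified the hypotheses, I would apply Theorem~\ref{thm:fuzzy-hardness} with $\Pi$ equal to planarity and then to series--parallelness, obtaining NP-hardness of FPED$_{\alpha_0}(\Pi)$ and FPEC$_{\alpha_0}(\Pi)$ for every fixed $\alpha_0\in(0,1]$; specializing to $\alpha_0=1$ and invoking the all-$\alpha$ corollary then yields NP-hardness under the all-$\alpha$ semantics as well. The all-$\alpha$ step is essentially free here: in the reduction underlying Theorem~\ref{thm:fuzzy-hardness} every edge receives membership exactly $1$, so every $\alpha$-cut with $\alpha\in(0,1]$ coincides with the crisp instance $G_0$, and a solution at one threshold is automatically a solution at all thresholds. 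Accordingly, the only genuine content---and the step I would handle most carefully---is the verification that each property is \emph{determined by $3$-connected components} in the precise Asano--Hirata sense; once that classical structural fact is established for planarity (via Wagner--Kuratowski) and for series--parallelness (via $K_4$-minor-freeness), NP-hardness under both semantics follows immediately.
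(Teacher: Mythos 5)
Your proposal follows the same route as the paper: verify that planarity and series--parallelness satisfy the Asano--Hirata hypotheses and then invoke Theorem~\ref{thm:fuzzy-hardness} together with the all-$\alpha$ corollary. The only difference is that you actually justify the hypotheses (via minor-closedness, Wagner--Kuratowski, and $K_4$-minor-freeness) where the paper merely asserts them, so your version is a more complete rendering of the same argument.
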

\begin{proof}
Properties such as \emph{planarity} and \emph{series--parallel} satisfy the Asano-Hirata conditions:  

\begin{itemize}
    \item They are hereditary under edge deletions and contractions.  
    \item They are determined by the $3$-connected components of the graph.
\end{itemize}

\noindent By Theorem \ref{thm:fuzzy-hardness}, FPED$_{\alpha_0}(\Pi)$ and FPEC$_{\alpha_0}(\Pi)$ are NP-hard for any property $\Pi$ satisfying these conditions, for every fixed threshold $\alpha_0 \in (0,1]$.   (all-$\alpha$ semantics), FPED$_\forall(\Pi)$ and FPEC$_\forall(\Pi)$ are also NP-hard.  
Since planarity and series parallelness satisfy the required conditions, it follows immediately that the corresponding fuzzy edge-deletion and edge-contraction problems are NP-hard under both thresholded and all-$\alpha$ semantics.
\end{proof}
\section{Conclusion}\label{sec:conclusion}
\noindent In this work, we have generalized the NP-hardness results of classical edge-deletion and edge-contraction problems to fuzzy graphs. By carefully extending the notions of connectivity, contractions, and 3-connected components to the fuzzy setting, we demonstrated that FPED and FPEC remain computationally intractable under both threshold and all-$\alpha$ semantics. Our constructions show that even standard subclasses of fuzzy graphs, including fuzzy 3-connected and fuzzy bipartite graphs, do not admit efficient algorithms for these problems. Moreover, common graph properties such as planarity and series–parallelness satisfy the required hereditary conditions, implying that their fuzzy generalizations are also NP-hard. These results highlight fundamental computational limitations in fuzzy graph optimization and provide a foundation for studying approximation algorithms or parameterized approaches for fuzzy network problems.


\end{document}